\documentclass{amsart}
\usepackage{graphicx} 
\numberwithin{equation}{section}
\usepackage{amsmath}
\usepackage{amssymb}
\usepackage{amsthm}
\usepackage{color}
\usepackage{enumerate}
\theoremstyle{definition}
\newtheorem{thm}{Theorem}[section]
\newtheorem{prop}[thm]{Proposition}
\newtheorem{rem}[thm]{Remark}
\newtheorem{lemma}[thm]{Lemma}

\newtheorem{example}[thm]{Example}
\newtheorem{cor}[thm]{Corollary}

\newcommand{\srb}{\nu_{1}}
\newcommand{\srbalpha}{\nu_{\alpha}}

\newcommand{\phisicalalpha}{\mu_{\alpha}}

\newcommand{\indusrbalpha}{\widetilde \nu_{\alpha}}
\newcommand{\density}{\rho_{1}}
\newcommand{\densityalpha}{\rho_{\alpha}}

\newcommand{\indudensityalpha}{h_{\alpha}}
\newcommand{\return}{\tau}
\newcommand{\returnalpha}{\tau_{\alpha}}

\newcommand{\lsvmapalpha}{T_{\alpha}}

\newcommand{\indumapalpha}{F_\alpha}

\newcommand{\map}{f}
\newcommand{\mapalpha}{f_{\alpha}}
\newcommand{\indudomain}{\mathcal{D}}
\newcommand{\maps}{\mathcal{S}}
\newcommand{\indulebesgue}{\tilde \lambda}
\newcommand{\uan}{u_{\alpha,n}}
\newcommand{\uanp}{u_{\alpha,n+1}}
\newcommand{\yan}{y_{\alpha,n}}
\newcommand{\yanp}{y_{\alpha,n+1}}

\title[Linear response of SRB and physical measures]{Linear response asymmetry between SRB and physical measures for families of intermittent maps with a transition point}
\author{Yuya Arima}
\date{\today}
\address{Graduate School of Mathematics, Nagoya University,
Furocho, Chikusaku, Nagoya, 464-8602, JAPAN} 
\email{yuya.arima.c0@math.nagoya-u.ac.jp}
\subjclass[2020]{37A05, 37C40, 37D25, 37E05}
\thanks{{\it Keywords}: linear response, intermittent maps, physical measure, SRB measure, Riemann zeta function, infinite ergodic theory}

\begin{document}

\begin{abstract}
    We study linear response for families of intermittent maps $\{f_\alpha\}$ whose Sinai–Ruelle–Bowen (SRB) measure undergoes a transition from finite to infinite total mass at a critical parameter value.
    Our results reveal the following fundamental asymmetry arising from this transition: 
smooth parameter dependence of the SRB measure $\srbalpha$ implies continuity of the physical measure $\phisicalalpha$ at the transition point, while simultaneously precluding its differentiability there.
In particular, although the physical measure varies continuously with respect to the parameter at the transition, it fails to admit a linear response for a large class of potentials in the usual sense.
We derive an explicit one-sided derivative formula describing this singular behavior and thereby give a quantitative characterization of how statistical properties degenerate as the total mass of the SRB measure diverges.
The key ingredient in the proof of our main theorem is a new method that relates the parameter dependence of physical measures near the transition point to the behavior of the Riemann zeta function near its pole at $1$.
\end{abstract}

\maketitle

\section{Introduction}
Let $\mathcal{M}$ be a compact Riemannian manifold and let $\{f_\alpha:\mathcal{M}\rightarrow \mathcal{M}\}_{\alpha\in J}$ be a one-parameter family of maps on $\mathcal{M}$, where $J\subset \mathbb{R}$ is an interval.
Assume that the map $\alpha\mapsto f_\alpha$ depends smoothly on $\alpha$.
From the viewpoint of physical applications in dynamical systems, it is important to investigate how the statistical properties of $f_\alpha$ depend smoothly on $\alpha$.
In order to formulate this problem mathematically, 
 we assume that for each $\alpha\in J$, the map $\mapalpha$ admits a unique physical measure $\phisicalalpha$, that is, an $\mapalpha$-invariant Borel probability measure on $\mathcal{M}$ such that the set 
\[
\left\{x\in\mathcal{M}:\lim_{n\to\infty}\frac{1}{n}\sum_{i=0}^{n-1}\phi (\mapalpha^i(x))=\int \phi d\phisicalalpha \text{ for every } \phi\in C(\mathcal{M})\right\}
\]
has positive Lebesgue measure, where $C(\mathcal{M})$ denotes the set of continuous functions on $\mathcal{M}$ (\cite{Young}).
Under this assumption, we can formulate the problem as follows: 
Given $\phi\in C(\mathcal{M})$, is the map 
\[
\alpha\mapsto R_{\text{Phy},\phi}(\alpha):= \int \phi d\phisicalalpha
\]
differentiable? If so, can one derive an explicit formula for its derivative in terms of $\mapalpha$, $\phisicalalpha$ and $\phi$? 
This line of research is known as linear response. 

One of the pioneering works on linear response is due to Ruelle \cite{Ruelle1997} for uniformly hyperbolic Axiom A dynamical systems. Following Ruelle’s work, the theory of linear response has been further developed by many researchers in various settings. 
For instance, we refer to \cite{Baladiunimodalpicewise, Baladiwhitney, Ruelle2009} for results on unimodal maps, \cite{BahsounSaussol, VivianeTodd, Korepanov, Leppanen} for intermittent maps, \cite{Dolgopyat, Zhang} for partially hyperbolic systems, and \cite{Bahsoun2020, HarryNakano, DavorTokman, DragivcevicSedro, GalatoloSedro} for random dynamical systems, as well as the references therein.
Furthermore, linear response has found applications beyond mathematics (see, for example, \cite{Lucarini, ragone2016new}).

Intermittent maps have attracted attention from both mathematicians and physicists. 
For results on intermittent maps, see, for example, \cite{Gouezel, LSB, Melbourne, PesinSenti, PollicottWeiss, PomeauManneville, Sarig2002subexponential, Terhasiu, Thaler1980, ThalerZweimuller} and the references therein. 
For a family $\{\mapalpha\}_{\alpha\in J}$ of intermittent maps on $\mathcal{M}$, the following phenomenon often occurs: There exists $\alpha^*$ in the interior of $J$ such that for each $\alpha<\alpha^*$, $\phisicalalpha$ is the unique physical measure that is absolutely continuous with respect to the Lebesgue measure $\lambda$ on $\mathcal{M}$, and for each $\alpha\geq \alpha^*$, $\phisicalalpha$ is the Dirac measure $\delta_p$ at some point $p \in \mathcal{M}$.
Moreover, for each $\alpha\geq \alpha^*$ there exists an infinite $\mapalpha$-invariant ergodic measure $\tilde\nu_\alpha$ that is absolutely continuous with respect to $\lambda$. We call $\alpha^*$ the transition point of the family $\{\mapalpha\}_{\alpha\in J}$.
Suppose that we are in this situation.
We also assume that there exists a Borel set $\indudomain\subset \mathcal{M}$ such that $0<\phisicalalpha(\indudomain)$ for all $\alpha<\alpha^*$ and $0<\tilde\nu_\alpha(\indudomain)<\infty$ for all $\alpha^*\leq\alpha$. We define $\srbalpha:=(\phisicalalpha(\indudomain))^{-1}\phisicalalpha$  for all $\alpha<\alpha^*$ and $\srbalpha:=\tilde\nu_\alpha(\indudomain)^{-1}\tilde \nu_{\alpha}$ for all $\alpha^*\leq\alpha$. 
In this paper, following the spirit of Young \cite{Young}, we refer to $\srbalpha$ as the SRB measure for $\mapalpha$. 

From the viewpoint of infinite ergodic theory (see, for example, \cite{Aaronsonbook, AaronsonDenker, Gouezel, Melbourne, Sera, ThalerZweimuller} and the references therein), another way to formulate the above problem mathematically is to consider the differentiability of the following map: For $\phi \in C(\mathcal{M})$, we consider the map
\begin{align*}
    \alpha\mapsto R_{\text{SRB},\phi}(\alpha):=\int \phi d\srbalpha.
\end{align*}
Indeed, this direction of research was considered by Bahsoun and Saussol \cite{BahsounSaussol}. 
Their work suggests that the SRB measure $\srbalpha$ depends smoothly on the parameter $\alpha$ even at the transition point, and that the map $R_{\text{SRB},\phi}$ does as well.

In view of the above motivation, it is a fundamental problem to investigate how the smooth dependence of the SRB measure $\srbalpha$ on the parameter $\alpha$ influences that of the physical measure $\phisicalalpha$.
However, this problem does not appear to have been addressed in the existing literature.
In this paper, we provide a first systematic study of this problem. 
Since, for $\alpha < \alpha^*$, $\srbalpha$ and $\phisicalalpha$ coincide up to a constant multiple and $\phisicalalpha=\delta_p$ for each $\alpha\geq \alpha^*$, our main focus is on the transition point $\alpha^*$.

Our main theorem (Theorem \ref{thm main super abstract}) shows that
under the assumption that the smooth dependence of the SRB measure $\srbalpha$ on the parameter $\alpha$ 
implies that, for any $\phi\in C(\mathcal{M})$, the map $R_{\text{Phy},\phi}$ admits a one-sided derivative at $\alpha^*$ given by an explicit and simple formula.
This result provides a necessary and sufficient condition on $\phi$ for the differentiability of $R_{\text{Phy},\phi}$ at $\alpha^*$. 
From this observation, we obtain the following two results: 
First, we provide an example of a family $\{\mapalpha\}_{\alpha \in J}$ of intermittent maps for which $R_{\text{SRB},\psi}$ is differentiable at $\alpha^*$ for all H\"older continuous potential $\psi\in C(\mathcal{M})$  with $\int |\psi|  d\srbalpha < \infty$, whereas $R_{\text{Phy},\phi}$ is not differentiable at $\alpha^*$ for a large class of H\"older potentials (see Theorem \ref{thm main pomeau}).   
Second, under the assumptions of the main theorem, $R_{\text{Phy},\phi}$ fails to be differentiable at $\alpha^*$ for a large class of continuous potentials (see Remark \ref{rem compare srb and physical}). 
These results reveal a fundamental asymmetry arising from the transition of the SRB measure from finite to infinite total mass: 
smooth parameter dependence of the SRB measure $\srbalpha$ implies continuity of the physical measure $\phisicalalpha$ at the transition point, while simultaneously precluding its differentiability.

\subsection{Precise statement of the main theorem in a general framework}\label{sec abstruct setting}
In this section, we formulate our main results in a general framework. 

Let $J\subset \mathbb{R}$ be a non-trivial closed interval and let $\{X_\alpha\}_{\alpha\in J}$ be a family of metric spaces. 
For each $\alpha\in J$, let $m_\alpha$ be a reference Borel probability measure on $X_\alpha$. 
Unless otherwise specified, we fix the family $\{(X_\alpha,m_\alpha)\}_{\alpha\in J}$ throughout this section.

For each $\alpha\in J$, let $\mapalpha:X_\alpha\rightarrow X_\alpha$ be a Borel measurable map. 
We consider the following conditions:
\begin{itemize}
    \item[(X1)]
    For each $\alpha\in J$ there exists a $\sigma$-finite Borel measure $\srbalpha$ on $X_\alpha$ that is absolutely continuous with respect to the reference probability measure $m_\alpha$ and is conservative, ergodic and invariant with respect to $f_\alpha$.
    Moreover, for each $\alpha\in J$ there exists a Borel set $\indudomain_\alpha$ such that we have $\srbalpha(\indudomain_\alpha)=1$.
\item[(X2)] For each $\alpha\in J$ there exists $p_\alpha\in X_\alpha$ such that for any open neighborhood $O\subset X_\alpha$ of $p_\alpha$ we have $\srbalpha(X_\alpha\setminus O)<\infty$. 
\end{itemize}
We refer to $\srbalpha$ as a SRB measure for $(X_\alpha,\mapalpha)$.

Let $\alpha\in J$. Suppose that $\{\mapalpha\}_{\alpha\in J}$ satisfies (X1).
We define the first return time function $\returnalpha:\indudomain_\alpha\rightarrow \mathbb{N}\cup\{\infty\}$ of $f_{\alpha}$ by 
\[
\returnalpha(x):=\inf\{n\in\mathbb{N}:f_\alpha^n(x)\in\indudomain_\alpha\}.
\]
and the first return map $\indumapalpha:\{\returnalpha<\infty\}\rightarrow \indudomain_\alpha$ by 
\[
\indumapalpha(x):=f_\alpha^{\returnalpha(x)}(x).
\]
Since $\srbalpha(\indudomain_\alpha)=1>0$, and $\srbalpha$ is conservative and $\mapalpha$-invariant, Poincar\'e recurrence theorem implies that $\srbalpha(\{\returnalpha=\infty\})=0$. We also consider the following conditions: 
\begin{itemize}
\item[(X3)] There exist functions $\alpha\in J\mapsto v(\alpha)\in (0,\infty)$, $\alpha\in J\mapsto u(\alpha)\in [0,\infty)$, $\alpha^*\in J$ and a constant $D\geq 1$ such that $v(\alpha^*)=1$, $u(\alpha^*)=0$ $v(\alpha)-u(\alpha)>1$ if $\alpha<\alpha^*$, $v(\alpha)+u(\alpha)<1$ if $\alpha>\alpha^*$, 
\begin{align}\label{eq X3}
\lim_{\alpha\to \alpha^*-0}\frac{v(\alpha)-v(\alpha^*)}{\alpha-\alpha^*}=v'_-(\alpha^*) \in \mathbb{R}
,\ \lim_{\alpha\to \alpha^*}\frac{u(\alpha)}{\alpha-\alpha^*}=0
\end{align}
and 
for all $\alpha\in J$ and $n\in\mathbb{N}$ we have
\begin{align}\label{eq comparability}
    {D}^{-1}{n^{-(v(\alpha)+u(\alpha))}}\leq {\srbalpha(\{\returnalpha>n\})} 
    \leq D{n^{-(v(\alpha)-u(\alpha))}}
\end{align}
\item[(X3')] There exist a continuous function $\alpha\in J\mapsto c_\alpha\in (0,\infty)$, a function $\alpha\in J\mapsto v(\alpha)\in (0,\infty)$, $\alpha^*\in J$, a constant $D>0$ and $\epsilon>0$ such that we have $v(\alpha^*)=1$, 
 $v(\alpha)>1$ if $\alpha<\alpha^*$, $v(\alpha)<1$ if $\alpha>\alpha^*$, 
 \begin{align}\label{eq X3'}
 \lim_{\alpha\to \alpha^*-0}\frac{v(\alpha)-v(\alpha^*)}{\alpha-\alpha^*}=v'_-(\alpha^*)\in \mathbb{R}
 \end{align}
 and 
for all $\alpha\in J$ and $n\in\mathbb{N}$ we have
\begin{align}\label{eq expansion of tail abstruct}
    |\srbalpha(\{\returnalpha>n\})-c_\alpha n^{-v(\alpha)}|\leq Dn^{-v(\alpha)-\epsilon}.
\end{align}
\end{itemize}
Note that (X3') implies (X3) with $u\equiv0$. For further discussion of the assumptions (X3) and (X3') see the final part of this section.

\begin{example}\label{example MP map}
    Let $J\subset \mathbb{R}$ be a non-trivial closed interval containing $1$ and let $\alpha\in J$.
    We set $X_\alpha:=[0,1]$ and $m_\alpha:=\lambda$, where $\lambda$ denotes the Lebesgue measure on $[0,1]$.
    We define the map $\lsvmapalpha:I\rightarrow{I}$ (\cite{LSB}) by 
\begin{align}\label{eq def lsv map intro} 
\lsvmapalpha(x):=
 \left\{
 \begin{array}{cc}
 x+2^{\alpha}x^{1+\alpha}  & \text{if}\  x\in[0,1/2]\\
   2x-1
   & \text{if}\ x\in(1/2,1]
 \end{array}
 \right. .
\end{align}
In Section \ref{sec proof sec 3} we show that
 $\{\lsvmapalpha\}_{\alpha\in J}$ satisfies (X1), (X2) and (X3') with $\indudomain_\alpha=[1/2,1]$, $p_\alpha=0$, $\alpha^*=1$, 
\[ 
c_\alpha=\frac{\densityalpha(1/2)}{4(\alpha 2^\alpha)^{1/\alpha}}
\text{ and }v(\alpha)=\frac{1}{\alpha} \text{ for each }\alpha\in J,
\]
where $\densityalpha$ denotes the Radon-Nikodym derivative of $\srbalpha$ with respect to $\lambda$, chosen to be continuous on $(0,1]$ (see \cite{Thaler1983, Thaler1995}).
\end{example}

Let $\{\mapalpha\}_{\alpha\in J}$ satisfy (X1), (X2) and (X3) and let $\alpha\in J$. 
Notice that  
\begin{align}\label{eq cal return}
    &\int \returnalpha d\srbalpha=\sum_{l=1}^{\infty}l\srbalpha(\{\returnalpha=l\})=\sum_{l=1}^\infty \sum_{k=0}^{l-1}\srbalpha(\{\returnalpha=l\})
    \\&=\sum_{k=0}^\infty\sum_{l=k+1}^\infty \srbalpha(\{\returnalpha=l\})=\sum_{k=0}^\infty\srbalpha(\{\returnalpha>k\}).\nonumber
\end{align}
and 
\begin{align}\label{eq lower and upper bound for sum of tail}
D^{-1}\sum_{k=1}^\infty \frac{1}{k^{v(\alpha)+u(\alpha)}} +1
\leq
    \sum_{k=0}^\infty\srbalpha(\{\returnalpha>k\})
    \leq 
    D \sum_{k=1}^\infty \frac{1}{k^{v(\alpha)-u(\alpha)}} +1
\end{align}
by \eqref{eq comparability}. Therefore, since $v(\alpha)+u(\alpha)\leq1$ if $\alpha\geq\alpha^*$ and $v(\alpha)-u(\alpha)>1$ if $\alpha<\alpha^*$, we obtain
\[
\int \returnalpha d\srbalpha=\infty \text{ if }\alpha\geq\alpha^* \text{ and }
\int \returnalpha d\srbalpha<\infty \text{ if }\alpha<\alpha^*.
\]
Note that since $\returnalpha$ is the first return time function, Kac's formula implies that 
\begin{align}\label{eq kac}
    \srbalpha(X_\alpha)=\int \returnalpha d\srbalpha
\end{align}
(see, for example, \cite[Proposition 1.4.3 and Corollary 1.4.4]{viana}). Hence,  $\srbalpha$ is finite if $\alpha<\alpha^*$ and $\srbalpha$ is infinite if $\alpha\geq \alpha^*$.  

We define the Borel probability measure $\phisicalalpha$ on $X_\alpha$ by 
\begin{align}\label{eq def phisical super abstruct} 
\phisicalalpha:=
 \left\{
 \begin{array}{cc}
 ({\srbalpha(X_\alpha)})^{-1}{\srbalpha}  & \text{if}\  \alpha<\alpha^* \text{ and }\alpha\in J\\
   \delta_{p_\alpha}   & \text{if}\ \alpha\geq\alpha^* \text{ and }\alpha\in J.
 \end{array}
 \right. .
\end{align}
The proof of the following lemma is given in Section \ref{sec proof sec 2}: We denote by $C_{b}(X)$ the set of bounded continuous functions on a metric space $X$. 
\begin{lemma}\label{lemma motivation physical}
Let $X$ be a metric space and let $m$ be a Borel probability measure on $X$.
    Let $f:X\rightarrow X$ be a measurable map and let $\nu$ be a $\sigma$-finite infinite Borel measure on $X$. Suppose that $\nu$ is invariant ergodic and conservative with respect to $f$ and absolutely continuous with respect to $m$, and there exists $p\in X$ such that for all open neighborhood $O\subset X$ of $p$ we have $\srbalpha(X\setminus O)<\infty$. 
    Then, the set 
    \[
    \left\{x\in X: \lim_{n\to\infty}\frac{1}{n}\sum_{i=0}^{n-1}\phi(f^i(x))=\phi(p) \text{ for all } \phi\in C_b(X)\right\}
    \]
    has positive measure with respect to $m$.
\end{lemma}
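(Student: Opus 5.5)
Our plan is to show that the full-$m$-measure set of points whose orbits spend asymptotically all their time near $p$ has positive $m$-measure. This rests on the ratio ergodic theorem (Hopf) for the conservative ergodic infinite measure $\nu$. Since $\nu$ is $\sigma$-finite and infinite, we fix $A\subset X$ with $0<\nu(A)<\infty$. One can take $A=X\setminus O_0$ for some open neighbourhood $O_0$ of $p$ if $\nu(X\setminus O_0)>0$; otherwise we take any set of finite positive measure. For an open neighbourhood $O$ of $p$, set $B_O:=X\setminus O$, so that $\nu(B_O)<\infty$ by hypothesis. Hopf's ratio ergodic theorem gives, for $\nu$-a.e.\ $x$,
\[
\frac{\sum_{i=0}^{n-1}1_{B_O}(f^ix)}{\sum_{i=0}^{n-1}1_{A}(f^ix)}\to\frac{\nu(B_O)}{\nu(A)}<\infty .
\]
Moreover, since $\nu$ is infinite, conservative and ergodic, $\frac1n\sum_{i<n}1_A(f^ix)\to0$ for $\nu$-a.e.\ $x$. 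This is the standard fact that $S_n1_A/n\to 0$ a.e.\ for infinite conservative ergodic systems, which follows from the ratio theorem by comparing with sets $C\supset A$ of arbitrarily large finite measure: $S_n1_A/n\le S_n1_A/S_n1_C\to\nu(A)/\nu(C)$. Combining these two limits, $\frac1n S_n1_{B_O}(x)\to0$ for $\nu$-a.e.\ $x$, for each fixed $O$.

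Next we pass to all $\phi\in C_b(X)$ simultaneously. We take the countable family $O_k:=B(p,1/k)$ and intersect the corresponding full-measure sets to get a $\nu$-conull set $G$. Fix $x\in G$, $\phi\in C_b(X)$ and $\varepsilon>0$. By continuity at $p$, choose $k$ with $|\phi-\phi(p)|<\varepsilon$ on $O_k$. Then
\[
\Bigl|\frac1n\sum_{i<n}\phi(f^ix)-\phi(p)\Bigr|\le\varepsilon+2\|\phi\|_\infty\frac1nS_n1_{B_{O_k}}(x),
\]
whose limsup is at most $\varepsilon$. Since no separability of $C_b(X)$ is needed, this single set $G$ works for every $\phi$.

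Finally, $\nu\ll m$ and $\nu(G^c)=0$ with $\nu(X)=\infty>0$, so $\nu(G)>0$. Absolute continuity then gives $m(G)>0$: if $m(G)=0$ we would have $\nu(G)=0$. The set $G$ is contained in the set in the statement, which is therefore of positive $m$-measure (after noting measurability, or else using inner measure via $G$).

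The main obstacle is the step $S_n1_A/n\to0$ a.e., that is, invoking the correct form of the ratio ergodic theorem for a $\sigma$-finite conservative ergodic measure. We will cite Hopf's theorem (e.g.\ Aaronson's book) rather than reprove it. The only care needed is to produce sets $C$ of arbitrarily large finite measure, which exist by $\sigma$-finiteness and $\nu(X)=\infty$.
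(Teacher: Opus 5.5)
Your proposal is correct and follows essentially the same route as the paper: intersect the $\nu$-conull sets on which $\frac1n S_n 1_{X\setminus B(p,1/k)}\to 0$, use continuity of $\phi$ at $p$ to handle every $\phi\in C_b(X)$ with a single set, and pass from $\nu(G)>0$ to $m(G)>0$ via $\nu\ll m$. The only difference is that the paper cites the infinite-measure Birkhoff theorem (Aaronson, Exercise 2.2.1) directly, whereas you derive it from Hopf's ratio theorem. In that derivation the auxiliary set $A$ is unnecessary, since your comparison with sets $C$ of large finite measure applies to $X\setminus O$ directly.
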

Since $\srbalpha$ is absolutely continuous with respect to the reference probability measure $m_\alpha$, 
Birkhoff's ergodic theorem and Lemma \ref{lemma motivation physical} imply that for each $\phi \in C_b(X_\alpha)$ the set  
$
  B_\alpha(\phi):=  \left\{x\in X_\alpha: \lim_{n\to\infty}\frac{1}{n}\sum_{i=0}^{n-1}\phi(\mapalpha^i(x))=\int \phi d\phisicalalpha \right\}
    $
    has positive measure with respect to $m_\alpha$. 
Moreover, by \cite[Remark 3.1.16]{PrzytyckiUr}, if $X_\alpha$ is a compact metric space then $\bigcap_{\phi\in C(X_\alpha)} B_\alpha(\phi)$
    has positive measure with respect to $m_\alpha$, where $C(X_\alpha)$ denotes the set of continuous functions on $X_\alpha$.
    Motivated by this observation, we refer to $\phisicalalpha$ as a physical measure for $(X_\alpha,\mapalpha)$. We are now in the position to state our main theorem.

\begin{thm}\label{thm main super abstract}
We assume that a family $\{\mapalpha:X_\alpha\rightarrow X_\alpha\}_{\alpha\in J}$ of Borel measurable maps satisfies (X1), (X2) and (X3). Let $\Phi:=\{\phi_\alpha:X_\alpha\rightarrow \mathbb{R}\}_{\alpha\in J}$ be a family of measurable potentials such that for all $\alpha\in (\inf J,\alpha^*]$ we have $\int|\phi_\alpha| d \phisicalalpha<\infty$, $\int |\phi_{\alpha^*}-\phi_{\alpha^*}(p_{\alpha^*})| d\nu_{\alpha^*}<\infty$ and 
\begin{align}\label{eq continuity main theorem}
\lim_{\alpha\to\alpha^*-0}\int (\phi_{\alpha}-\phi_{\alpha^*}(p_{\alpha^*}))d\srbalpha=\int (\phi_{\alpha^*}-\phi_{\alpha^*}(p_{\alpha^*}))d\nu_{\alpha^*}.
\end{align}
Then, we have
\begin{align*}
&D^{-1}v_-'(\alpha^*){\int (\phi_{\alpha^*}-\phi_{\alpha^*}(p_{\alpha^*}))d\nu_{\alpha^*}}
\leq \liminf_{\alpha\to\alpha^*-0}
\frac{\int \phi_\alpha d\phisicalalpha-\int \phi_{\alpha^*} d\mu_{\alpha^*}}{\alpha-\alpha^*}
\\&\leq\limsup_{\alpha\to\alpha^*-0}\frac{\int \phi_\alpha d\phisicalalpha-\int \phi_{\alpha^*} d\mu_{\alpha^*}}{\alpha-\alpha^*}
\leq 
D
v_-'(\alpha^*){\int (\phi_{\alpha^*}-\phi_{\alpha^*}(p_{\alpha^*}))d\nu_{\alpha^*}}, 
\end{align*}
where $D$ is the constant appearing in \eqref{eq comparability}.
Moreover, if $\{\mapalpha\}_{\alpha\in J}$ satisfies (X3'), we obtain 
\begin{align}\label{eq abstract main eq}
\lim_{\alpha\to\alpha^*-0}\frac{\int \phi_\alpha d\phisicalalpha-\int \phi_{\alpha^*} d\mu_{\alpha^*}}{\alpha-\alpha^*}
=
\frac{v_-'(\alpha^*)\int (\phi_{\alpha^*}-\phi_{\alpha^*}(p_{\alpha^*}))d\nu_{\alpha^*}}{c_{\alpha^*}}. 
\end{align}
\end{thm}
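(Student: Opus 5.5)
For $\alpha<\alpha^*$ write $c:=\phi_{\alpha^*}(p_{\alpha^*})$. Since $\mu_{\alpha^*}=\delta_{p_{\alpha^*}}$, we have $\int\phi_{\alpha^*}d\mu_{\alpha^*}=c$. Moreover $\phisicalalpha=\srbalpha/\srbalpha(X_\alpha)$, so
\[
\int \phi_\alpha d\phisicalalpha-\int \phi_{\alpha^*}d\mu_{\alpha^*}=\frac{\int(\phi_\alpha-c)\,d\srbalpha}{\srbalpha(X_\alpha)} .
\]
The numerator converges to $I:=\int(\phi_{\alpha^*}-c)\,d\nu_{\alpha^*}$ by \eqref{eq continuity main theorem}. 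The whole problem therefore reduces to the asymptotics of $(\alpha-\alpha^*)\srbalpha(X_\alpha)$ as $\alpha\to\alpha^*-0$, and I will show this quantity tends to $1/v'_-(\alpha^*)$ up to the factor $D^{\pm1}$.

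By Kac's formula \eqref{eq kac} and \eqref{eq cal return}, $\srbalpha(X_\alpha)=\sum_{k\ge0}\srbalpha(\{\returnalpha>k\})$. Then \eqref{eq lower and upper bound for sum of tail} gives
\[
D^{-1}\zeta(v(\alpha)+u(\alpha))+1\le \srbalpha(X_\alpha)\le D\,\zeta(v(\alpha)-u(\alpha))+1 ,
\]
whenever $v(\alpha)+u(\alpha)>1$. If this fails, the lower bound is $+\infty$ and the difference quotient is $0$. In that case I will check separately that the needed bound still holds. This is consistent, because the upper/lower bound $D^{\pm1}v'_-(\alpha^*)I$ must then accommodate it; one may need $v'_-(\alpha^*)<0$ and $v(\alpha)+u(\alpha)>1$ for $\alpha$ near $\alpha^*$, which follows from $u=o(\alpha-\alpha^*)$ when $v'_-\neq0$.

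Now use the pole of the Riemann zeta function at $1$: $\zeta(s)=\frac1{s-1}+O(1)$ as $s\to1+0$. By (X3) and $v(\alpha^*)=1$,
\[
v(\alpha)\pm u(\alpha)-1=v'_-(\alpha^*)(\alpha-\alpha^*)+o(\alpha-\alpha^*).
\]
Hence $(\alpha-\alpha^*)\zeta(v(\alpha)\pm u(\alpha))\to 1/v'_-(\alpha^*)$, provided $v'_-(\alpha^*)\neq0$ (necessarily $v'_-\le0$, since $v>1$ to the left). Multiplying the reciprocal bounds by $I$ gives the two-sided liminf/limsup estimate. The sign bookkeeping matters here: $\alpha-\alpha^*<0$, $v'_-<0$ and $I$ can have either sign, so the inequalities should be arranged with absolute values and then split into the cases $I\ge0$ and $I<0$.

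For the degenerate case $v'_-(\alpha^*)=0$, I will argue that $(\alpha-\alpha^*)\srbalpha(X_\alpha)\to\infty$ in absolute value, since $\zeta(s)\ge\frac1{s-1}$ and $s-1=o(\alpha-\alpha^*)$. The quotient then tends to $0$, matching the claimed bounds.

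Under (X3') I will sharpen this. Write $\srbalpha(\{\returnalpha>k\})=c_\alpha k^{-v(\alpha)}+E_\alpha(k)$ with $|E_\alpha(k)|\le Dk^{-v(\alpha)-\epsilon}$. Then
\[
\srbalpha(X_\alpha)=c_\alpha\zeta(v(\alpha))+O\bigl(\zeta(v(\alpha)+\epsilon)\bigr)+1 .
\]
The error term is bounded uniformly for $\alpha$ near $\alpha^*$, because $v(\alpha)+\epsilon\ge1+\epsilon/2$. Using continuity of $c_\alpha$, $(\alpha-\alpha^*)\srbalpha(X_\alpha)\to c_{\alpha^*}/v'_-(\alpha^*)$, which yields \eqref{eq abstract main eq}.

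I expect the main obstacle to be the careful handling of signs and degenerate cases in the (X3) inequality, namely $v'_-=0$ and $I$ negative, rather than the zeta asymptotics themselves. Those are standard: compare the sum with $\int_1^\infty x^{-s}\,dx$.
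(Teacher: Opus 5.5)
Your route is the paper's. Kac's formula turns the difference quotient $Q(\alpha)$ into $\int(\phi_\alpha-c)\,d\nu_\alpha$ divided by $(\alpha-\alpha^*)\nu_\alpha(X_\alpha)$. The tail bounds sandwich $\nu_\alpha(X_\alpha)$ between $1+D^{-1}\zeta(v+u)$ and $1+D\zeta(v-u)$, and the simple pole of $\zeta$ at $1$ finishes the argument. Your (X3') argument is essentially the paper's. Two small remarks:
\begin{itemize}
\item Your paragraph about $v(\alpha)+u(\alpha)\le 1$ is vacuous, since (X3) gives $v(\alpha)-u(\alpha)>1$ for $\alpha<\alpha^*$.
\item Your separate case $v'_-(\alpha^*)=0$ is correct. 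You could avoid it by working, as the paper does, with $\nu_\alpha(X_\alpha)^{-1}$ and $(s-1)\zeta(s)\to1$ instead of dividing by $v'_-(\alpha^*)$.
\end{itemize}

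The real problem is the step you leave as ``sign bookkeeping'', and you have the delicate case backwards. Put $R(\alpha):=\bigl((\alpha-\alpha^*)\nu_\alpha(X_\alpha)\bigr)^{-1}$ and $v'_-:=v'_-(\alpha^*)<0$. Since $\alpha-\alpha^*<0$ and $x\mapsto 1/x$ is decreasing on $(-\infty,0)$, your bounds give
\[
D\,v'_-\le\liminf_{\alpha\to\alpha^*-0}R(\alpha)\le\limsup_{\alpha\to\alpha^*-0}R(\alpha)\le D^{-1}v'_- .
\]
If the numerator tends to $I<0$, multiplying gives exactly the stated chain $D^{-1}v'_-I\le\liminf Q\le\limsup Q\le Dv'_-I$.

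For $I>0$, however, it gives the reversed chain $Dv'_-I\le\liminf Q\le\limsup Q\le D^{-1}v'_-I$. The stated chain is then false whenever $D>1$, because it would force $D^{-1}v'_-I\le Dv'_-I$, i.e.\ $v'_-I\ge 0$. This already happens for the Pomeau--Manneville family $T_\alpha$ with $\phi(x)=x$:
\begin{itemize}
\item there $I>0$ and $v'_-(1)=-1$;
\item (X3) at $n=1$ forces $D>1$, because $\nu_\alpha(\{\tau_\alpha>1\})=\nu_\alpha((1/2,3/4))<1$;
\item the true limit $-I/c_1$ would then have to lie in the empty range $[-D^{-1}I,-DI]$.
\end{itemize}
The paper's proof contains the same unexamined step (``combining \dots\ we obtain'').

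What your argument and the paper's actually prove is
\[
\min\{D^{-1}v'_-I,\,Dv'_-I\}\le\liminf_{\alpha\to\alpha^*-0}Q(\alpha)\le\limsup_{\alpha\to\alpha^*-0}Q(\alpha)\le\max\{D^{-1}v'_-I,\,Dv'_-I\}.
\]
This coincides with the statement exactly when $v'_-(\alpha^*)\,I\ge0$. The (X3') identity is unaffected.
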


\begin{rem}
If the family of metric spaces $\{X_\alpha\}_{\alpha\in J}$ does not depend on $\alpha$, then family $\Phi:=\{\phi_\alpha:X_\alpha\rightarrow \mathbb{R}\}_{\alpha\in J}$ of measurable functions in the above theorem can be taken to be a single function $\phi$. In this case, \eqref{eq continuity main theorem} is replaced by 
\[
\lim_{\alpha\to\alpha^*-0}\int \psi d\srbalpha=\int \psi d\nu_{\alpha^*}, \text{ where }\psi:=\phi-\phi(p_{\alpha^*}).
\]

\end{rem}

We denote by $\text{Int}(J)$ the interior of $J\subset \mathbb{R}$.

\begin{rem}\label{rem compare srb and physical}
Assume that we are in the same setting as in Theorem \ref{thm main super abstract} and $\alpha^*\in \text{Int}(J)$. 
If $p_\alpha$ is independent of $\alpha$ then the map
$\alpha\mapsto R_{\text{Phy},\Phi}(\alpha):=\int \phi_\alpha d\phisicalalpha$ is differentiable on $(\alpha^*,\sup J)$ and its derivative is identically zero. 
In particular, by Theorem \ref{thm main super abstract}, $R_{\text{Phy},\Phi}$ is differentiable at $\alpha^*$ if and only if 
\begin{align}\label{eq main n s condition}
v_-'(\alpha^*)\int (\phi_{\alpha^*}-\phi_{\alpha^*}(p_{\alpha^*}))d\nu_{\alpha^*}=0
.    
\end{align}
On the other hand, assume that $\{f_\alpha\}_{\alpha\in J}$ satisfies (X3') and that the limit 
\[
g(\alpha^*):=
\lim_{\alpha\to \alpha^*+0}\frac{\phi_\alpha(p_\alpha)-\phi_{\alpha^*}(p_{\alpha^*})}{p_\alpha-p_{\alpha^*}} \in[-\infty,\infty]
\]
exists (if $p_\alpha$ does not depend on $\alpha\in J$ then we set $g(\alpha^*):=0$).
Then $R_{\text{Phy},\Phi}$ is differentiable at $\alpha^*$ if and only if 
\begin{align}\label{eq main n s condition strong}
\frac{v_-'(\alpha^*)\int (\phi_{\alpha^*}-\phi_{\alpha^*}(p_{\alpha^*}))d\nu_{\alpha^*}}{c_{\alpha^*}}=g(\alpha^*).   
\end{align}
Note that it is impossible to normalize $\phi_{\alpha^*}$ so that it satisfies \eqref{eq main n s condition}.
Indeed, for a continuous potential $\phi_\alpha$ with $\int |\phi_{\alpha^*}-\phi_{\alpha^*}(p_{\alpha^*})| d\nu_{\alpha^*}<\infty$ and $c\in\mathbb{R}$ if we consider $\tilde \phi_{\alpha^*}:=\phi_{\alpha^*}-c$ then we have
\[
\int(\tilde\phi_{\alpha^*}-\tilde\phi_{\alpha^*}(p_{\alpha^*})) d\nu_{\alpha^*}=\int(\phi_{\alpha^*}-\phi_{\alpha^*}(p_{\alpha^*})) d\nu_{\alpha^*}.
\] 
\end{rem}

Given a family of measurable functions $\{\psi_n\}_{n\in\mathbb{N}}$ on a  metric space $X$ with a reference Borel probability measure $m$, we define
\[
\operatorname*{ess\,limsup}_{n\to\infty} \psi_n
:=
\inf \left\{
c\in\mathbb{R}
:
m\!\left(
\left\{ x\in X :
\limsup_{n\to\infty}\psi_n(x)>c
\right\}
\right)=0
\right\},
\]
and similarly for $\operatorname*{ess\,liminf}_{n\to\infty} \psi_n$.
If
\[
\operatorname*{ess\,liminf}_{n\to\infty} \psi_n
=
\operatorname*{ess\,limsup}_{n\to\infty} \psi_n,
\]
then we write
\[
\operatorname*{ess\,lim}_{n\to\infty} \psi_n
:=
\operatorname*{ess\,limsup}_{n\to\infty} \psi_n
=\operatorname*{ess\,liminf}_{n\to\infty} \psi_n.
\]

\begin{cor}\label{cor time average}
We assume that a family $\{\mapalpha:X_\alpha\rightarrow X_\alpha\}_{\alpha\in J}$ of Borel measurable maps satisfies (X1), (X2) and (X3').
We also assume that for all $\alpha\in (\inf J,\alpha^*)$ the probability measure $\phisicalalpha$ is equivalent to $m_\alpha$. 
Let $\{\phi_\alpha:X_\alpha\rightarrow \mathbb{R}\}_{\alpha\in J}$ be a family of measurable potentials such that for all $\alpha\in (\inf J,\alpha^*]$ we have $\int|\phi_\alpha| d \phisicalalpha<\infty$, $\int |\phi_{\alpha^*}-\phi_{\alpha^*}(p_{\alpha^*})| d\nu_{\alpha^*}<\infty$ and 
\begin{align}\label{eq continity corollary}
\lim_{\alpha\to\alpha^*-0}\int (\phi_{\alpha}-\phi_{\alpha^*}(p_{\alpha^*}))d\srbalpha=\int (\phi_{\alpha^*}-\phi_{\alpha^*}(p_{\alpha^*}))d\nu_{\alpha^*}.
\end{align}
Then, we have
\[
\lim_{\alpha\to\alpha^*-0}
\operatorname*{ess\,lim}_{n\to\infty}
\left|
\frac{\frac{1}{n}\sum_{i=0}^{n-1} \phi_\alpha\circ\mapalpha^i-\phi_{\alpha^*}(p_{\alpha^*})}{\alpha-\alpha^*}-\frac{v_-'(\alpha^*)\int (\phi_{\alpha^*}-\phi_{\alpha^*}(p_{\alpha^*}))d\nu_{\alpha^*}}{c_{\alpha^*}}\right|=0.
\]
\end{cor}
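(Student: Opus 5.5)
The plan is to reduce the corollary to the second part of Theorem \ref{thm main super abstract}, namely \eqref{eq abstract main eq}, by identifying the almost-sure limit of the Birkhoff averages for each fixed $\alpha<\alpha^*$. Write $L:=v_-'(\alpha^*)\int (\phi_{\alpha^*}-\phi_{\alpha^*}(p_{\alpha^*}))d\nu_{\alpha^*}/c_{\alpha^*}$. Fix $\alpha\in(\inf J,\alpha^*)$. By (X1) and the discussion after (X3), $\srbalpha$ is finite, so $\phisicalalpha=\srbalpha(X_\alpha)^{-1}\srbalpha$ is an $\mapalpha$-invariant ergodic probability measure. Since $\int|\phi_\alpha|d\phisicalalpha<\infty$, Birkhoff's ergodic theorem gives
\[
\lim_{n\to\infty}\frac1n\sum_{i=0}^{n-1}\phi_\alpha\circ\mapalpha^i=\int\phi_\alpha\, d\phisicalalpha
\]
$\phisicalalpha$-almost everywhere. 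By hypothesis $\phisicalalpha$ is equivalent to $m_\alpha$, so the convergence also holds $m_\alpha$-almost everywhere. Equivalence is used here precisely to pass from $\phisicalalpha$-null to $m_\alpha$-null sets. This is the only point requiring care, and it is essentially immediate.

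Consequently, for each such $\alpha$, the function whose $\operatorname*{ess\,lim}$ is taken converges $m_\alpha$-almost everywhere to the constant
\[
\left|\frac{\int\phi_\alpha d\phisicalalpha-\phi_{\alpha^*}(p_{\alpha^*})}{\alpha-\alpha^*}-L\right|.
\]
By the definition of $\operatorname*{ess\,limsup}$ and $\operatorname*{ess\,liminf}$, both equal this constant, so the $\operatorname*{ess\,lim}$ exists and equals it. Note that $\mu_{\alpha^*}=\delta_{p_{\alpha^*}}$ by \eqref{eq def phisical super abstruct}, so $\phi_{\alpha^*}(p_{\alpha^*})=\int\phi_{\alpha^*}d\mu_{\alpha^*}$.

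Finally, the hypotheses of the corollary, namely (X1), (X2), (X3') (which implies (X3)), integrability, and \eqref{eq continity corollary}, coincide with those of Theorem \ref{thm main super abstract}. Therefore \eqref{eq abstract main eq} gives
\[
\frac{\int\phi_\alpha d\phisicalalpha-\int\phi_{\alpha^*}d\mu_{\alpha^*}}{\alpha-\alpha^*}\to L \quad\text{as } \alpha\to\alpha^*-0.
\]
Hence the constant above tends to $0$, which is the claim. I expect no real obstacle: all the analytic difficulty is contained in Theorem \ref{thm main super abstract}.
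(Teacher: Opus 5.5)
Your proposal is correct and follows the paper's proof essentially step for step: Birkhoff's theorem for the ergodic probability $\mu_\alpha$ ($\alpha<\alpha^*$), transfer of the almost-everywhere convergence to $m_\alpha$ via equivalence, identification of the $\operatorname*{ess\,lim}$ with the resulting constant, and then \eqref{eq abstract main eq} from Theorem \ref{thm main super abstract}. Your explicit check that $\operatorname*{ess\,limsup}$ and $\operatorname*{ess\,liminf}$ both equal that constant is a detail the paper leaves implicit.
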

See Theorem \ref{thm main pomeau} for the results corresponding to 
Theorem \ref{thm main super abstract}, Remark \ref{rem compare srb and physical}, 
and Corollary \ref{cor time average} applied to Example \ref{example MP map}.

Let $\{f_\alpha\}_{\alpha\in J}$ satisfy (X1) and (X2).
In the setting of intermittent maps, the transition of the SRB measure from finite to infinite total mass leads to substantial changes in the statistical properties of the system.
We mention two representative phenomena.

First, when $\alpha<\alpha^*$, Birkhoff’s ergodic theorem yields a strong law of large numbers for integrable potentials with respect to $\phisicalalpha$. In contrast, for $\alpha\ge \alpha^*$ no normalizing sequence gives rise to a non-trivial almost-sure limit for non-negative integrable potentials (see \cite{Aaronson1981} and \cite[Theorem 2.4.1]{Aaronsonbook}).

Second, the thermodynamic formalism changes at the transition point (see \cite{kessebohmer2004multifractaltobeappdated}):
For simplicity, we assume that we are in the setting of Example \ref{example MP map}. For $\alpha\in J$ and $t\in \mathbb{R}$ we denote by $P_\alpha(t)$ the topological pressure for $-t\log |\mapalpha'|$ (see \cite[Chapter 9]{walters2000introduction} for details of the topological pressure). 
For $\alpha<\alpha^*$ the pressure function $t\mapsto P_\alpha(t)$ exhibits a first-order phase transition at $t=1$, whereas for $\alpha\ge\alpha^*$ this non-differentiability disappears.

It is therefore natural to seek a quantitative description of these statistical changes at the transition point.
Nevertheless, the work of Bahsoun and Saussol \cite{BahsounSaussol} indicates that for broad classes of smooth families $\{f_\alpha\}_{\alpha\in J}$ the SRB measure $\srbalpha$ depends smoothly on the parameter $\alpha$ at the transition point $\alpha^*$. 
Hence, these statistical changes at the transition point are not reflected in linear response of the SRB measure.

Motivated by this observation, we investigate linear response of the physical measure under the assumption that the SRB measure $\srbalpha$ depends smoothly on the parameter $\alpha$ at the transition point $\alpha^*$. More precisely, we assume that the transition of the SRB measure from finite to infinite total mass occurs smoothly. 
This assumption is formulated through conditions (X3) and (X3'). 
The smoothness is reflected in \eqref{eq X3} and \eqref{eq X3'}.

Conditions \eqref{eq comparability} and \eqref{eq expansion of tail abstruct} are standard in infinite ergodic theory (see, for example, \cite{BruinTerhesiu, Melbourne, Terhesiu, Terhasiu} and \cite{Sarig2002subexponential, Sera, ThalerZweimuller}, and the references therein).
Accordingly, techniques for verifying these types of hypotheses have been developed in several settings beyond Pomeau–Manneville maps, including Kleinian groups (see Stadlbauer and Stratmann \cite[Section 2]{StadlbauerStratmann}) and parabolic rational maps (see Aaronson, Denker and Urba\'{n}ski \cite[Sections 8 and 9]{AaronsonDenker}).
Therefore, Extending the arguments of Bahsoun and Saussol \cite{BahsounSaussol} to these contexts, one expects that Theorem \ref{thm main super abstract} applies there as well.

However, the aim of this paper is to provide a systematic study of 
how the smooth dependence of the SRB measure $\srbalpha$ on the parameter $\alpha$ influences that of the physical measure $\phisicalalpha$. We also provide a simple example of a family $\{\mapalpha\}_{\alpha \in J}$ of intermittent maps 
for which linear response holds for the SRB measure at the transition point but fails for the physical measure.
For this reason, we do not go into details in the setting of Kleinian groups and parabolic rational maps.

As mentioned above, Theorem \ref{thm main super abstract} and Remark \ref{rem compare srb and physical} reveal a fundamental asymmetry arising from the transition of the SRB measure from finite to infinite total mass: 
for a large class of families $\Phi$ of potentials, smooth parameter dependence (X3) or (X3') of the SRB measure $\srbalpha$ implies continuity of $R_{\text{Phy},\Phi}$ at $\alpha^*$, while simultaneously precluding its differentiability. Moreover, condition (X3') provides a quantitative description of this singular behavior.

The main difference between our results and the results of Bahsoun and Saussol \cite{BahsounSaussol}, Baladi and Todd \cite{VivianeTodd} and Korepanov \cite{Korepanov} for families of intermittent maps is as follows:

In \cite{BahsounSaussol, VivianeTodd, Korepanov}, the phase space is assumed to be a closed interval $I\subset\mathbb{R}$ equipped with the normalized Lebesgue measure. In contrast, our setting does not require the phase space to be compact or one-dimensional. Moreover, those works assume that each map $f_\alpha$ admits a Markov partition, whereas we impose no such assumption.

\cite{VivianeTodd, Korepanov} studied linear response for the physical measure away from the transition point. By contrast, our primary focus is the behavior of the physical measure at the transition point, where extending their methods is not straightforward.
\cite{BahsounSaussol} treat the transition point, but restrict attention to the SRB measure.

Our main difficulty is that $\alpha^*$ is the transition point.
For this reason, it is challenging to apply the cone techniques developed by Baladi and Todd \cite{VivianeTodd}, which have now become the standard approach for the study of linear response in families of intermittent maps.
A key insight for overcoming this difficulty is that, as can already be seen from \eqref{eq cal return} and \eqref{eq lower and upper bound for sum of tail}, the asymptotic behavior of 
\begin{align}\label{eq intro asy}
\frac{\int \phi_\alpha d\phisicalalpha-\int \phi_{\alpha^*} d\mu_{\alpha^*}}{\alpha-\alpha^*},
\text{ as }\alpha\to\alpha^*-0,    
\end{align}
is closely related to the Riemann zeta function $\zeta$. Moreover, a crucial fact is that the Riemann zeta function has a simple pole at $1$. These simple observations enable us to derive the asymptotic behavior of \eqref{eq intro asy} in a rather general setting.

The outline of this paper is as follows. In Section \ref{sec proof sec 2}, we give the proofs of the results in Section \ref{sec abstruct setting}. In Section \ref{sec example}, we introduce a class of families of intermittent maps on $[0,1]$ for which the results of Section \ref{sec abstruct setting} hold for arbitrary H\"older continuous potentials. 
In particular, this class includes Example \ref{example MP map}. In Section \ref{sec proof sec 3}, we verify that any family of intermittent maps belonging to the class introduced in Section \ref{sec example} satisfies the assumptions of Theorem \ref{thm main super abstract} and \ref{cor time average} for arbitrary H\"older continuous potentials.
In Section \ref{sec PMmap}, we show that Example \ref{example MP map} is an example of a family of intermittent maps for which linear response holds for the SRB measure at the transition point, but fails for the physical measure.

\section{Proofs of the results in Section \ref{sec abstruct setting}}\label{sec proof sec 2}
In this section, we give proofs of the results in Section \ref{sec abstruct setting}.
In the following,
we use the following uniform version of Landau's notation: For a interval $J\subset \mathbb{R}$, $\{\gamma_{n,\alpha}\}_{n\in\mathbb{N},\alpha\in J}\subset\mathbb{R}$ and $\{x_{n,\alpha}\}_{n\in\mathbb{N},\alpha\in J}\subset\mathbb{R}$ we write
$x_{n,\alpha}= O(\gamma_{n,\alpha})$ if there exists a constant $C>0$ such that for all $n\in\mathbb{N}$ and $\alpha\in J$ we have 
\[
|x_{n,\alpha}|\leq C|\gamma_{n,\alpha}|.
\]

\emph{Proof of Lemma \ref{lemma motivation physical}.}
Let $X$ be a metric space and let $m$ be a Borel probability measure on $X$. Let $f:X\rightarrow X$ be a measurable map and let $\nu$ be a $\sigma$-finite infinite Borel measure on $X$. Suppose that $\nu$ is invariant ergodic and conservative with respect to $f$ and absolutely continuous with respect to $m$ and there exists $p\in X$ such that for all open neighborhood $O\subset X$ of $p$ we have $\srbalpha(X\setminus O)<\infty$.

By Birkhoff's ergodic theorem (see \cite[Exercise 2.2.1]{Aaronsonbook}), for each $n\in\mathbb{N}$ there exists a Borel set $\Omega_n\subset X$ such that $\nu(X\setminus \Omega_n)=0$ and for all $x\in \Omega_n$ we have
\begin{align}\label{eq proof of motivation 0 limit}
    \lim_{n\to\infty}\frac{1}{n}\sum_{i=0}^{n-1} 1_{X\setminus B(p,1/n)} (f^{i}(x))=0,
\end{align}
where $B(p,1/n)$ is the open ball centered at $p$ with radius $1/n$ with respect to the metric on $X$ and $1_{A}$ denotes the indicator function of the Borel set $A\subset X$. We set $\Omega=\bigcap_{n\in\mathbb{N}} \Omega_n$. Note that, since $\nu(X\setminus \Omega)=\nu(\bigcup_{n\in\mathbb
N}X\setminus \Omega_n)=0$, we have $\nu(\Omega)>0$. Since $\nu$ is absolutely continuous with respect to $m$, it is enough to show that 
\begin{align}\label{eq proof of motivation enough}
    \Omega\subset 
    \left\{x\in X: \lim_{n\to\infty}\frac{1}{n}\sum_{i=0}^{n-1}\phi(f^i(x))=\phi(p) \text{ for all } \phi\in C_b(X)\right\}. 
\end{align}
Let $x\in \Omega$ and let $\phi\in C_b(X)$. We fix an arbitrary $\varepsilon>0$. Since $\phi$ is continuous at $p$, there exists $M\geq 1$ such that for all $x\in B(p,1/M)$ we have 
\begin{align}\label{eq cont phi p}
    |\phi(x)-\phi(p)|<\varepsilon.
\end{align}
Notice that 
\begin{align}\label{eq clerly eq}
    1_X=1_{X\setminus B(p,1/M)}+1_{B(p,1/M)}.
\end{align}
Combining this with \eqref{eq proof of motivation 0 limit}, we obtain 
$
\lim_{n\to\infty} \frac{1}{n}\sum_{i=0}^{n-1}1_{B(p,1/M)}(f^i(x))=1.
$
Therefore, there exists $N\geq 1$ such that for all $n\geq N$ we have 
\[
\left|\frac{1}{n}\sum_{i=0}^{n-1}1_{B(p,1/M)}(f^i(x))-1\right|<\varepsilon
\text{ and }
\left|\frac{1}{n}\sum_{i=0}^{n-1}1_{X\setminus B(p,1/M)}(f^i(x))\right|<\varepsilon.
\]
Hence, by \eqref{eq cont phi p} and \eqref{eq clerly eq}, for all $n\geq N$ we obtain 
\begin{align*}
    &\left|\frac{1}{n}\sum_{i=0}^{n-1} \phi(f^i(x))-\phi(p)\right|
    \leq\left|\frac{1}{n}\sum_{i=0}^{n-1} (\phi-\phi(p)) \cdot 1_{B(p,1/M)}(f^i(x))\right| 
    \\&+2\max\{1,\|\phi\|_\infty\}\left|\frac{1}{n}\sum_{i=0}^{n-1}1_{X\setminus B(p,1/M)}(f^i(x))\right|<\epsilon(1+\epsilon)+2\max\{1,\|\phi\|_\infty\}\epsilon.
    \end{align*}
This implies \eqref{eq proof of motivation enough} and the proof is complete.\qed

Let $J\subset \mathbb{R}$ be a non-trivial closed interval and let $\{X_\alpha\}_{\alpha\in J}$ be a family of metric spaces. 
For each $\alpha\in J$, let $m_\alpha$ be a reference Borel probability measure on $X_\alpha$.

\emph{Proof of Theorem \ref{thm main super abstract}.}
We assume that a family $\{\mapalpha:X_\alpha\rightarrow X_\alpha\}_{\alpha\in J}$ of Borel measurable maps satisfies (X1), (X2) and (X3). Let $\Phi:=\{\phi_\alpha:X_\alpha\rightarrow \mathbb{R}\}_{\alpha\in J}$ be a family of measurable functions such that for all $\alpha\in (\inf J,\alpha^*]$ we have $\int|\phi_\alpha| d \phisicalalpha<\infty$, $\int |\phi_{\alpha^*}-\phi_{\alpha^*}(p_{\alpha^*})| d\nu_{\alpha^*}<\infty$ and 
\begin{align}\label{eq assumption cont int phi}
\lim_{\alpha\to\alpha^*-0}\int (\phi_{\alpha}-\phi_{\alpha^*}(p_{\alpha^*}))d\srbalpha=\int (\phi_{\alpha^*}-\phi_{\alpha^*}(p_{\alpha^*}))d\nu_{\alpha^*}.
\end{align}

Note that, since $\mu_{\alpha^*}=\delta_{p_{\alpha^*}}$, we have $\int \phi_{\alpha^*} d\mu_{\alpha^*}=\phi_{\alpha^*}(p_{\alpha^*})$.
By \eqref{eq kac}, for all $\alpha\in J$ with $\alpha<\alpha^*$ we have
\begin{align}\label{eq kac for phi}
    \int \phi_{\alpha} d\phisicalalpha-\int \phi_{\alpha^*} d\mu_{\alpha^*}=\int(\phi_{\alpha}-\phi_{\alpha^*} (p_{\alpha^*})) d\phisicalalpha=\frac{\int (\phi_{\alpha}-\phi_{\alpha^*}(p_{\alpha^*}))d\srbalpha}{\int \returnalpha d\srbalpha}.
\end{align}
By \eqref{eq cal return} and \eqref{eq lower and upper bound for sum of tail}, for all $\alpha\in J$ with $\alpha<\alpha^*$ we obtain
\begin{align}\label{eq estimate for return}
    D^{-1}\zeta(v(\alpha)+u(\alpha))+1
    \leq 
    \int \returnalpha d\srbalpha
    \leq
    D\zeta(v(\alpha)-u(\alpha))+1.
\end{align}
Since $z \in \mathbb{C}\mapsto \zeta(z)$ has a simple pole at $1$ with residue $1$ and 
\begin{align}\label{eq limit v and u}
\lim_{\alpha\to\alpha^*-0}(v(\alpha)-u(\alpha))=\lim_{\alpha\to\alpha^*-0}(v(\alpha)+u(\alpha))=1    
\end{align}
by \eqref{eq X3}, we obtain 
\begin{align*}
    &\lim_{\alpha\to\alpha^*-0}(v(\alpha)-u(\alpha)-1)\zeta(v(\alpha)-u(\alpha))
    \\&=    \lim_{\alpha\to\alpha^*-0}(v(\alpha)+u(\alpha)-1)\zeta(v(\alpha)+u(\alpha))=1.
\end{align*}
Combining this with \eqref{eq X3}, \eqref{eq kac for phi}, \eqref{eq estimate for return}, \eqref{eq assumption cont int phi} and \eqref{eq limit v and u}, we obtain  
\begin{align*}
&D^{-1}v_-'(\alpha^*){\int (\phi_{\alpha^*}-\phi_{\alpha^*}(p_{\alpha^*}))d\nu_{\alpha^*}}
\leq \liminf_{\alpha\to\alpha^*-0}
\frac{\int \phi_\alpha d\phisicalalpha-\int \phi_{\alpha^*} d\mu_{\alpha^*}}{\alpha-\alpha^*}
\\&\leq\limsup_{\alpha\to\alpha^*-0}\frac{\int \phi_\alpha d\phisicalalpha-\int \phi_{\alpha^*} d\mu_{\alpha^*}}{\alpha-\alpha^*}
\leq 
D
v_-'(\alpha^*){\int (\phi_{\alpha^*}-\phi_{\alpha^*}(p_{\alpha^*}))d\nu_{\alpha^*}}. 
\end{align*}
We now prove \eqref{eq abstract main eq} assuming (X3'). Combining \eqref{eq cal return} with \eqref{eq expansion of tail abstruct}, for all $\alpha\in J$ with $\alpha<\alpha^*$ we obtain 
\begin{align*}
    \int \returnalpha d\srbalpha=\sum_{k=0}^\infty \srbalpha(\{\returnalpha>k\})
    =c_\alpha\zeta(v(\alpha))+ O\left(\zeta(v(\alpha)+\epsilon)\right)
\end{align*}
Therefore, 
since $z \in \mathbb{C}\mapsto \zeta(z)$ is continuous on $\{z\in\mathbb{C}:\text{Re}(z)>1\}$ and has a simple pole at $1$ with residue $1$, $\lim_{\alpha\to \alpha^*} c_\alpha=c_{\alpha^*}$ and 
$
\lim_{\alpha\to\alpha^*}v(\alpha)=1    
$
by \eqref{eq X3'}, we obtain 
\begin{align*}
    \lim_{\alpha\to\alpha^*-0}(v(\alpha)-1)\int \returnalpha d\srbalpha
    =c_{\alpha^*}.
\end{align*}
Combining this limit with \eqref{eq kac for phi} and \eqref{eq X3'}, we obtain 
\begin{align*}\lim_{\alpha\to\alpha^*-0}\frac{\int \phi_\alpha d\phisicalalpha-\int \phi_{\alpha^*} d\mu_{\alpha^*}}{\alpha-\alpha^*}
=
\frac{v_-'(\alpha^*)\int (\phi_{\alpha^*}-\phi_{\alpha^*}(p_{\alpha^*}))d\nu_{\alpha^*}}{c_{\alpha^*}}. 
\end{align*}
\qed

\emph{Proof of Corollary \ref{cor time average}.}
By Birkhoff's ergodic theorem, for each $\alpha\in J$ with  $\alpha<\alpha^*$, we have 
\begin{align}\label{eq birkkhoff average}
\lim_{n\to\infty}\frac{1}{n}\sum_{i=0}^{n-1}\phi_\alpha (\mapalpha^i(x))=\int \phi_\alpha d \phisicalalpha.    
\end{align}
for $\phisicalalpha$-almost every  $x\in X$.
Since $\phisicalalpha$ is equivalent to $m_\alpha$, for each $\alpha\in J$ with  $\alpha<\alpha^*$, equality \eqref{eq birkkhoff average} holds for $m_\alpha$-almost every  $x\in X_\alpha$. This implies that for each $\alpha\in J$ with  $\alpha<\alpha^*$ we obtain 
\begin{align*}
&\operatorname*{ess\,lim}_{n\to\infty}
\left|
\frac{\frac{1}{n}\sum_{i=0}^{n-1} \phi_\alpha(\mapalpha^i(x))-\phi_{\alpha^*}(p_{\alpha^*})}{\alpha-\alpha^*}-v_-'({\alpha^*})c_{\alpha^*}^{-1}\int (\phi_{\alpha^*}-\phi_{\alpha^*}(p_{\alpha^*}))d\nu_{\alpha^*}\right|
\\&=
\left|
\frac{\int \phi_\alpha d\phisicalalpha-\phi_{\alpha^*}(p_{\alpha^*})}{\alpha-\alpha^*}-v_-'({\alpha^*})c_{\alpha^*}^{-1}\int (\phi_{\alpha^*}-\phi_{\alpha^*}(p_{\alpha^*}))d\nu_{\alpha^*}\right|.
\end{align*}
Corollary \ref{cor time average} now follows from Theorem \ref{thm main super abstract}.   \qed

\section{Examples of families of one-dimensional intermittent maps}\label{sec example}
Let $I:=[0,1]$ be endowed with the Euclidean topology.
For any subset $J\subset \mathbb{R}$ we always endow $J$ with the relative topology from the Euclidean space $\mathbb{R}$ and the Borel $\sigma$-algebra $\mathcal{B}$.
For a non-trivial interval $J\subset \mathbb{R}$ and $i\in\mathbb{N}$, 
the class $C^i(J)$ of $C^i$ functions on $J$ is endowed with the $C^i$ norm defined by
\[
\|h\|_{C^i(J)} := \max\{\|h\|_\infty,\|h'\|_\infty,\ldots,\|h^{(i)}\|_\infty\}
\quad \text{for } h\in C^i(J).
\]

Let $0<\alpha_-<\alpha_+$. For $A\subset\mathbb{R}$ we denote by $\overline{A}$ the Euclidean closer of $A$. 
In this paper, we consider a family of intermittent maps $\{f_\alpha\}_{\alpha\in[\alpha_-,\alpha_+]}$ on $I$ satisfying the following conditions:
\begin{itemize}
    \item[(f1)] There exist $m\in\mathbb{N}$ with $m\geq 2$ and disjoint non-trivial intervals $\{I_i\}_{1\leq i\leq m}$ such that $I\setminus \bigcup_{i=1}^m I_i$ is a null set with respect to the Lebesgue measure on $I$.
    \item[(f2)] For all $\alpha\in [\alpha_-,\alpha_+]$ and $1\leq i\leq m$ the map $f_{\alpha}|_{I_i}:I_i\rightarrow f(I_i)$ is a $C^3$ diffeomorphism and $\overline{f(I_i)}=[0,1]$. Furthermore, 
    there exists a open set $W_{\alpha,i}$ such that $\overline{I_i}\subset W_{\alpha,i}$ and $f_\alpha|_{I_i}$ extends to a $C^3$ diffeomorphism $f_{\alpha,i}$ from $W_{\alpha,i}$ onto its images.
    Moreover, for each $1\leq i\leq m$ the functions 
    \begin{align}\label{eq jointly}
    (\alpha,x)\in[\alpha_-,\alpha_+]\times \overline{I_i}\mapsto f_{\alpha,i}'(x):= \frac{d}{dx}f_{\alpha,i}(x)    
    \end{align}
    and 
    \begin{align}\label{eq jointly twice}
    (\alpha,x)\in[\alpha_-,\alpha_+]\times \overline{I_i}\mapsto f_{\alpha,i}''(x):= \frac{d^2}{dx^2}f_{\alpha,i}(x)    
    \end{align}
    are jointly continuous.
    \item[(f3)] There exists $\sigma>1$ such that for all $\alpha\in [\alpha_-,\alpha_+]$ and $2\leq i\leq m$ we have $\inf_{x\in I_i} |\frac{d}{dx}f_{\alpha,i}(x)|>\sigma$. Moreover, $f_{\alpha,1}(0)=0$ and for each $x\in \overline{I_1}\setminus\{0\}$ we have $\frac{d}{dx}f_{\alpha,1}(x)>1$.
    \item[(f4)] There exist a continuous function $\alpha\mapsto b_\alpha\in(0,\infty)$ on $[\alpha_-,\alpha_+]$, $\epsilon>0$ and a constant $C>0$ such that for all $\alpha\in[\alpha_-,\alpha_+]$ and $x\in I_1$ we have 
    \begin{align}\label{eq expansion}
        \left|\frac{d}{dx}f_{\alpha,1}(x)-(1+b_\alpha(1+\alpha)x^{\alpha})\right|\leq Cx^{\alpha+\epsilon}.
    \end{align}
    \item[(f5)] For all $1\leq i\leq m$ and $x\in \overline{I_i}$ the functions $\alpha\mapsto f_{\alpha,i}^{-1}(x)$, $\alpha\mapsto (f_{\alpha,i}^{-1})'(x):=\frac{d}{dx}f_{\alpha,i}^{-1}(x)$ and $\alpha\mapsto (f_{\alpha,i}^{-1})''(x):=\frac{d^2}{dx^2}f_{\alpha,i}^{-1}(x)$ are in $C^1([\alpha_-,\alpha_+])$.
\end{itemize}
Note that,
without loss of generality, 
we may assume that $\epsilon>0$ appearing in (f4) satisfies $\epsilon<\alpha_-$.

Let $\{f_\alpha\}_{\alpha\in[\alpha_-,\alpha_+]}$ be a family of maps on $I$ satisfying the above conditions. Define
\[
\indudomain:=\bigcup_{i=2}^m\overline{I}_i.
\]
Let $\alpha\in[\alpha_-,\alpha_+]$. We define the first return time function $\returnalpha:\indudomain\rightarrow \mathbb{N}\cup\{\infty\}$ of $f_{\alpha}$ by 
$
\returnalpha(x):=\inf\{n\in\mathbb{N}:f_\alpha^n(x)\in\indudomain\}.
$
and the first return map $\indumapalpha:\{\returnalpha<\infty\}\rightarrow \indudomain$ by 
\[
\indumapalpha(x):=f_\alpha^{\returnalpha(x)}(x).
\]
Let $2\leq i \leq m$ and let $k\in\mathbb{N}$. We set
\[
C_{(i,k)}:=\overline{I_i\cap \{\tau_\alpha=k\}}.
\]
We define the map $F_{\alpha,(i,k)}^{-1}:\indudomain\rightarrow C_{(i,k)}$ by 
\[
F_{\alpha,(i,k)}^{-1}(x):=F_{(i,k)}^{-1}(\alpha,x):=
(f_{\alpha,i}^{-1}\circ f_{\alpha,1}^{-(k-1)})(x).
\]
Following \cite{Korepanov}, we introduce the following notations:
We define $G_{\alpha,(i,k)}:\indudomain\rightarrow (0,\sigma^{-1}]$, $G_{\alpha,(i,k)}':\indudomain\rightarrow (0,\infty)$ and $G_{\alpha,(i,k)}'':\indudomain\rightarrow (0,\infty)$ by 
\begin{align*}
    &G_{\alpha,(i,k)}(x):=G_{(i,k)}(\alpha,x):=\left|\frac{d}{dx}F_{\alpha,(i,k)}^{-1}(x)\right|,\ 
    \\&G_{\alpha,(i,k)}'(x):=G_{(i,k)}'(\alpha,x):=\left|\frac{d^2}{dx^2}F_{\alpha,(i,k)}^{-1}(x)\right|
    \text{ and }
    \\&G_{\alpha,(i,k)}''(x):=G_{(i,k)}''(\alpha,x):=\left|\frac{d^3}{dx^3}F_{\alpha,(i,k)}^{-1}(x)\right|.
\end{align*}
For $x\in \indudomain$ we define 
\begin{align*}
&\partial_\alpha F_{\alpha,(i,k)}^{-1}(x):=\frac{\partial}{\partial \alpha} F^{-1}_{(i,k)}(\alpha,x),\ \partial_\alpha G_{\alpha,(i,k)}(x):=\frac{\partial}{\partial \alpha} G_{(i,k)}(\alpha,x), \text{ and }
\\&
\partial_\alpha G'_{\alpha,(i,k)}(x):=\frac{\partial}{\partial \alpha} G_{(i,k)}'(\alpha,x).
\end{align*}

The following conditions were considered in \cite{Korepanov}, and we assume that the family of induced systems $\{F_{\alpha}\}_{\alpha\in[\alpha_-,\alpha_+]}$ derived from $\{f_\alpha\}_{\alpha\in[\alpha_-,\alpha_+]}$ satisfies these conditions:
There exist constants $K_0>0$ and $\{\gamma_{(i,k)}\}_{2\leq i\leq m, k\in\mathbb{N}}\subset[1,\infty)$ such that uniformly in $\alpha\in[\alpha_-,\alpha_+]$ and $(i,k)\in \{2,\cdots,m\}\times \mathbb{N}$ we have 
\begin{itemize}
    \item[(A2)] $\|G_{\alpha,(i,k)}'/G_{\alpha,(i,k)}\|_{\infty}\leq K_0$.
    \item[(A3)] $\|G_{\alpha,(i,k)}''/G_{\alpha,(i,k)}\|_{\infty}\leq K_0$.
    \item[(A4)] $\|\partial_\alpha F^{-1}_{\alpha,(i,k)}\|_\infty\leq \gamma_{(i,k)}$.
    \item[(A5)] $\|\partial_\alpha G_{\alpha,(i,k)}/G_{\alpha,(i,k)}\|_\infty\leq \gamma_{(i,k)}$.
    \item[(A6)] $\|\partial_\alpha G_{\alpha,(i,k)}'/G_{\alpha,(i,k)}\|_\infty\leq \gamma_{(i,k)}$. 
    \item[(A7)] $\sum_{i=2}^m\sum_{k=1}^\infty \|G_{\alpha,(i,k)}\|_\infty\gamma_{(i,k)}\leq K_0$.
\end{itemize}
Note that by (f3), for all $\alpha\in[\alpha_-,\alpha_+]$ and $(i,k)\in \{2,\cdots,m\}\times \mathbb{N}$ we have  
\begin{itemize}
\item[(A1)] $\|G_{\alpha,(i,k)}\|_\infty\leq \sigma^{-1}$.   
\end{itemize}
We denote by $\maps(I,\alpha_-,\alpha_+)$ the set of all families $\{f_\alpha:I\rightarrow I\}_{\alpha\in[\alpha_-,\alpha_+]}$ satisfying (f1)-(f5) and such that $\{F_\alpha\}_{\alpha\in J}$ satisfies (A2)-(A7).

Let $\{f_\alpha\}_{\alpha\in[\alpha_-,\alpha_+]} \in \maps(I,\alpha_-,\alpha_+)$. For $\alpha\in [\alpha_-,\alpha_+]$, a function $u:\indudomain\rightarrow\mathbb{R}$ we define the Ruelle operator $P_\alpha$ by 
\[
P_\alpha(u)(x):=\sum_{i=2}^m\sum_{k=1}^\infty G_{\alpha,(i,k)}(x) u( F^{-1}_{\alpha,(i,k)}(x))\ (x\in \indudomain),
\]
whenever the infinite sum on the right-hand side converges.
Note that by (A7), for all continuous function $u$ on $\indudomain$ we have 
\[
\sup_{x\in\indudomain}\left\{\sum_{i=2}^m\sum_{k=1}^\infty |G_{\alpha,(i,k)}(x) u( F^{-1}_{\alpha,(i,k)}(x))|\right\}
\leq \|u\|_\infty\sum_{i=2}^m\sum_{k=1}^\infty \|G_{\alpha,(i,k)}\|_\infty\leq K_0
\]
and thus, $P_\alpha(u)(x)$ is well-defined for all $x\in \indudomain$.
By \cite[Lemma 4.1]{Korepanov}, the operator $P_\alpha: C^2(\indudomain)\rightarrow C^2(\indudomain)$ is well-defined.  
Moreover, \cite{Korepanov} showed the following theorem:
Define  
\[
\tilde\lambda:=(\lambda(\indudomain))^{-1}\lambda|_{\indudomain}.
\]
\begin{thm}\label{thm korepanov}
    Let $0<\alpha_-<\alpha_+$  and let $\{f_\alpha\}_{\alpha\in[\alpha_-,\alpha_+]} \in \maps(I,\alpha_-,\alpha_+)$. Then, for all $\alpha\in [\alpha_-,\alpha_+]$ there exists a unique function $\indudensityalpha\in C^2(\indudomain)$ such that $\indusrbalpha(\indudomain)=1$, where $\indusrbalpha:=\indudensityalpha d\indulebesgue$,  
    \begin{align}\label{eq fixed point transfer operator}
    P_\alpha(\indudensityalpha)=\indudensityalpha
    \end{align}
    and there exists a constant $D_\alpha\geq1$ such that for all $x\in \indudomain$ we have 
    \begin{align}\label{eq nonnegative}
        D_\alpha^{-1}\leq \indudensityalpha(x)\leq D_\alpha.
    \end{align}
    Moreover, 
    the map $\alpha\in [\alpha_-,\alpha_+]\mapsto \indudensityalpha\in C^2(\indudomain)$ 
    is continuous.
\end{thm}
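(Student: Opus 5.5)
The plan is to prove Theorem \ref{thm korepanov} by treating $P_\alpha$ as a quasi-compact positive operator on $C^2(\indudomain)$ and applying a perturbation argument in the parameter. There are four steps: a uniform Lasota--Yorke inequality, existence and uniqueness of the fixed density for each fixed $\alpha$, two-sided bounds on that density, and continuity in $\alpha$.

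\emph{Uniform Lasota--Yorke inequality.} Differentiate $P_\alpha u$ term by term. The derivative of $G_{\alpha,(i,k)}\, u\circ F^{-1}_{\alpha,(i,k)}$ splits into two pieces:
\begin{itemize}
\item $G' \cdot u\circ F^{-1}$, which is controlled by (A2) and (A7);
\item $\pm G^2\cdot u'\circ F^{-1}$, which carries the contracting factor $\sigma^{-1}$ from (A1).
\end{itemize}
The second derivative is handled in the same way using (A3). Summing over branches, this should give, uniformly in $\alpha\in[\alpha_-,\alpha_+]$,
\[
\|P_\alpha^n u\|_{C^2}\le C\sigma^{-n}\|u\|_{C^2}+C\|u\|_{C^1}.
\]
Here $C$ depends only on $K_0$ and $\sigma$. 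Since the embedding $C^2\hookrightarrow C^1$ is compact, Hennion's theorem then makes $P_\alpha$ quasi-compact on $C^2(\indudomain)$, with spectral radius $1$ because $P_\alpha$ preserves the integral against $\indulebesgue$.

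\emph{Fixed density for each $\alpha$.} Positivity of $P_\alpha$ and the bounded distortion coming from (A2) show that $P_\alpha$ maps the cone $\{u>0:\ |u'|\le a u,\ |u''|\le a u\}$ into itself for $a$ large. A limit of Ces\`aro averages $\frac1n\sum_{j<n}P_\alpha^j 1$ then produces a fixed point $\indudensityalpha$ in this cone, normalized by $\int \indudensityalpha\, d\indulebesgue=1$. Since $\indudensityalpha$ has log-Lipschitz constant at most $a$ and integrates to $1$, we get $D_\alpha^{-1}\le \indudensityalpha\le D_\alpha$, which is \eqref{eq nonnegative}. For uniqueness, and for $1$ being a simple isolated eigenvalue, use that every branch of $F_\alpha$ is onto $\indudomain$, which holds by (f2). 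This gives a full-branch Gibbs--Markov structure, so the induced system is exact, and exactness excludes other peripheral eigenvalues and other fixed densities.

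\emph{Continuity in $\alpha$ (the main obstacle).} The difficulty is that $\alpha\mapsto P_\alpha$ is in general not continuous in the $C^2\to C^2$ operator norm. I would instead use Keller--Liverani perturbation theory, which only needs $\|P_\alpha-P_\beta\|_{C^2\to C^1}\to 0$ as $\beta\to\alpha$ together with the uniform Lasota--Yorke inequality above.

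To estimate $P_\alpha u-P_\beta u$ in $C^1$, apply the mean value theorem in $\alpha$ to each term and its $x$-derivative:
\begin{itemize}
\item (A4) bounds $\partial_\alpha F^{-1}_{\alpha,(i,k)}$, which enters through $u'\circ F^{-1}$ and $u''\circ F^{-1}$;
\item (A5) and (A6) bound $\partial_\alpha G$ and $\partial_\alpha G'$ relative to $G$;
\item (A7) makes $\sum \|G\|_\infty\gamma_{(i,k)}$ finite, so the branch sum converges.
\end{itemize}
The outcome is $\|P_\alpha-P_\beta\|_{C^2\to C^1}\le C|\alpha-\beta|$. Keller--Liverani then gives continuity of the spectral projector onto the eigenvalue $1$, hence continuity of $\indudensityalpha$ as a map into $C^1$.

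To upgrade to continuity in $C^2$, I would write
\[
\indudensityalpha-h_\beta=(I-P_\alpha)^{-1}(P_\alpha-P_\beta)h_\beta
\]
on the complement of the constants, where $(I-P_\alpha)^{-1}$ is bounded on $C^2$ with zero mean. The difficulty is that a direct bound on $(P_\alpha-P_\beta)h_\beta$ in $C^2$ needs third derivatives of $h_\beta$, which we do not control. I would instead interpolate: $h_\beta$ is uniformly bounded in $C^2$ and converges in $C^1$. Combining this with iterating $P_\alpha^n$, which contracts the top-order part by $\sigma^{-n}$ through the Lasota--Yorke inequality, should give equicontinuity of the second derivatives. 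Arzel\`a--Ascoli plus uniqueness of the fixed point then yields $C^2$ convergence.
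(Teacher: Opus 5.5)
The paper gives no proof of this statement; it imports it from Korepanov. Hypotheses (A1)--(A7) fit exactly the scheme you use: a Lasota--Yorke inequality on $C^2$ and a perturbation $P_\alpha-P_\beta$ that costs one derivative.

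Your existence, uniqueness and two-sided-bound steps are fine in outline, and so is continuity of $\alpha\mapsto h_\alpha$ in $C^1$. For the latter, Keller--Liverani works, or directly $(I-P_\alpha)(h_\alpha-h_\beta)=(P_\alpha-P_\beta)h_\beta$ on zero-mean $C^1$. One small repair: a single constant $a$ in $|u'|\le au$, $|u''|\le au$ gives an invariant cone only if $3K_0\sigma^{-1}+\sigma^{-2}<1$. Use two constants instead, $a_1\ge K_0/(1-\sigma^{-1})$ and $a_2\ge (K_0+3K_0\sigma^{-1}a_1)/(1-\sigma^{-2})$.

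\textbf{The gap is the final upgrade to continuity in $C^2$.} The Lasota--Yorke inequality bounds $\|h_\beta''\|_\infty$ but gives no modulus of continuity, so it cannot supply the equicontinuity that Arzel\`a--Ascoli needs. Differentiate the fixed-point equation twice. You get $h_\beta''=M_\beta h_\beta''+L_\beta$, where $M_\beta w=\sum_{i,k}G_{\beta,(i,k)}^3\,w\circ F^{-1}_{\beta,(i,k)}$ satisfies $\|M_\beta^n\|_\infty\le C\sigma^{-2n}$; this is your contracted top-order part. The remainder $L_\beta$ consists of the terms $\pm\frac{d^3}{dx^3}F^{-1}_{\beta,(i,k)}\cdot h_\beta\circ F^{-1}_{\beta,(i,k)}$ and $\pm 3\,G_{\beta,(i,k)}G'_{\beta,(i,k)}\,h_\beta'\circ F^{-1}_{\beta,(i,k)}$.

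Iterating disposes of $M_\beta$. After that, both equicontinuity of $h_\beta''$ and its convergence as $\beta\to\alpha$ depend on the third derivatives of the inverse branches, uniformly over branches. The hypotheses control these only through the bound (A3):
\begin{itemize}
\item (A6) controls $\partial_\alpha G'$, but nothing controls $G''_\beta-G''_\alpha$;
\item (f2) asserts joint continuity only of $f'_{\alpha,i}$ and $f''_{\alpha,i}$.
\end{itemize}
So the obstruction is not the missing bound on $h_\beta'''$ that you point to. Already $\big((P_\alpha-P_\beta)1\big)''=\sum_{i,k}\pm\big(\frac{d^3}{dx^3}F^{-1}_{\alpha,(i,k)}-\frac{d^3}{dx^3}F^{-1}_{\beta,(i,k)}\big)$ need not tend to $0$.

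In fact the stated hypotheses seem compatible with genuine failure of $C^2$-continuity. In the LSV family, add $\varepsilon\chi((x-3/4)/\varepsilon)$ to $(f^{-1}_{\alpha,2})''$, where $\varepsilon=\delta(\alpha-\alpha_0)_+$ with $\delta$ small and $\chi\ge0$ is a smooth bump supported in $(1,2)$. Add an $O(\varepsilon^2)$ renormalisation so the branch stays onto. For each fixed $x$ this perturbation is $C^1$ in $\alpha$ with bounded $\alpha$-derivative, and it only touches the branch $(2,1)$. So (f2), (f5) and (A1)--(A7) survive. Yet $\frac{d^3}{dx^3}f^{-1}_{\alpha,2}$ acquires the bump $\chi'((x-3/4)/\varepsilon)$, of fixed height on a shrinking interval. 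This enters $h_\alpha''$ through the $(2,1)$-term of $L_\alpha$.

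To close the step you need an extra assumption, for instance $\sum_{i,k}\|\frac{d^3}{dx^3}F^{-1}_{\beta,(i,k)}-\frac{d^3}{dx^3}F^{-1}_{\alpha,(i,k)}\|_\infty\to0$ as $\beta\to\alpha$. Then $(I-M_\beta)(h_\beta''-h_\alpha'')=(M_\beta-M_\alpha)h_\alpha''+L_\beta-L_\alpha$, and $(I-M_\beta)^{-1}$ is uniformly bounded. The right-hand side tends to $0$ in sup norm, using:
\begin{itemize}
\item uniform continuity of the single function $h_\alpha''$;
\item (A4)--(A7);
\item your $C^1$ convergence;
\item the extra assumption.
\end{itemize}
Without that assumption, your argument proves continuity of $\alpha\mapsto h_\alpha$ in $C^1$ together with $\sup_\alpha\|h_\alpha\|_{C^2}<\infty$. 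That is all the paper uses later: Proposition \ref{prop expansion of tail} and the proof of Theorem \ref{thm main abstract} need only uniform $C^1$ bounds and sup-norm continuity of $h_\alpha$.
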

In the following, for each $\alpha\in [\alpha_-,\alpha_+]$ we denote by $\indudensityalpha$ the function obtained in Theorem \ref{thm korepanov}.
By \eqref{eq fixed point transfer operator}, \eqref{eq nonnegative} and the definition of the transfer operator $P_\alpha: C^2(\indudomain)\rightarrow C^2(\indudomain)$, it is straightforward to show that for each $\alpha\in [\alpha_{-},\alpha_{+}]$, the Bore probability measure 
\[
\indusrbalpha:=\indudensityalpha d\tilde\lambda,
\]
is $\indumapalpha$-invariant, ergodic and equivalent to $\tilde\lambda$ (see, for example, the arguments in \cite[Sections 17 and 18]{Urbanskinoninvertible}). 

Let $\alpha\in [\alpha_-,\alpha_+]$.
We define the measure $\srbalpha$ on $I$ by 
\begin{align}\label{eq def srb one dimensional}
\srbalpha:=\sum_{n=0}^{\infty}\sum_{k=n+1}^\infty\indusrbalpha|_{\{\returnalpha=k\}}\circ \mapalpha^{-n}.    
\end{align}
By \cite{Thaler1980} (see also Proposition \ref{prop expansion of tail}), $\srbalpha(I)=\infty$ if and only if $\alpha\geq 1$. We define the probability measure $\phisicalalpha$ on $I$ by 
\begin{align}\label{eq def phisical abstruct} 
\phisicalalpha:=
 \left\{
 \begin{array}{cc}
({\srbalpha(I)})^{-1} {\srbalpha}  & \text{if}\  \alpha_-\leq\alpha<1\\
   \delta_0   & \text{if}\ 1\leq\alpha\leq \alpha_+
 \end{array}
 \right. .
\end{align}

\begin{rem}\label{rem verify LSv map}
Let $0<\alpha_-<\alpha_+$. 
    We consider the family of maps $\{\lsvmapalpha\}_{\alpha\in [\alpha_-,\alpha_+]}$, where $\lsvmapalpha$ is defined by \eqref{eq def lsv map intro} for each $\alpha\in [\alpha_-,\alpha_+]$. It is not difficult to verify that $\{\lsvmapalpha\}_{\alpha\in [\alpha_-,\alpha_+]}$ satisfies (f1)-(f5) with $m=2$, $I_1=[0,1/2]$, $I_2=(1/2,1]$ and $b_\alpha=2^\alpha$ for $\alpha\in [\alpha_-,\alpha_+]$. Moreover, by essentially the same calculation in the proof of \cite[Theorem 3.1]{Korepanov} (see also \cite[Section 5]{BahsounSaussol}) based on a weak version of the estimate in Lemma \ref{lemma estimate yn}, $\{\lsvmapalpha\}_{\alpha\in [\alpha_-,\alpha_+]}$ satisfies (A2)-(A7). Therefore, $\{\lsvmapalpha\}_{\alpha\in [\alpha_-,\alpha_+]}\in \maps(I,\alpha_-,\alpha_+)$. 
\end{rem}

For $\eta>0$ a function $\phi: I\rightarrow \mathbb{R}$ is said to be H\"older continuous with exponent $\eta$ if there exists a constant $C>0$ such that for all $x,y\in I$ we have 
\begin{align}\label{eq def holder}
    |\phi(x)-\phi(y)|\leq C|x-y|^\eta.
\end{align}
A function $\phi:I\rightarrow \mathbb{R}$ is said to be H\"older continuous if there exists $\eta>0$ such that $\phi$ is H\"older continuous with exponent $\eta$.

We are now in the position to state our main theorem in this section.
\begin{thm}\label{thm main abstract}
 Let $\alpha_-<1$ and let $\alpha_+>1$. Let $\{\mapalpha\}_{\alpha\in[\alpha_-,\alpha_+]}\in \maps(I,\alpha_-,\alpha_+)$. Then, $\{\mapalpha\}_{\alpha\in[\alpha_-,\alpha_+]}$ satisfies (X1), (X2) and (X3') with 
 \[
 c_\alpha=\frac{\sum_{i=2}^m\indudensityalpha(f_{\alpha,i}^{-1}(0))|(f_{\alpha,i}^{-1})'(0)|}{(\alpha b_\alpha)^{1/\alpha}}
 \text{ and }
 v(\alpha)=\frac{1}{\alpha}.
 \]
Moreover, for any H\"older continuous function $\phi: I\rightarrow \mathbb{R}$ we have
\[
    \int|\phi-\phi (0)| d\srb<\infty \text{ and }\lim_{\alpha\to 1-0}\int (\phi-\phi(0)) d\srbalpha=\int(\phi-\phi(0)) d\srb.
\]
In particular, the conclusions of Theorem \ref{thm main super abstract} and Corollary \ref{cor time average} hold.
\end{thm}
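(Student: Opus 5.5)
The plan is to verify (X1), (X2), (X3') for the family in $\maps(I,\alpha_-,\alpha_+)$ with $X_\alpha=I$, $m_\alpha=\lambda$, $\indudomain_\alpha=\indudomain$, $p_\alpha=0$ and $\alpha^*=1$, and then to prove the Hölder-continuity statement. Theorem \ref{thm main super abstract} and Corollary \ref{cor time average} then apply directly. For the latter we also need $\phisicalalpha\sim\lambda$ for $\alpha<1$, which follows from the positivity of $\indudensityalpha$ pushed forward along \eqref{eq def srb one dimensional}.

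\textbf{Step 1: (X1) and (X2).} By Theorem \ref{thm korepanov}, $\indusrbalpha$ is an $\indumapalpha$-invariant ergodic probability measure equivalent to $\indulebesgue$. The tower construction \eqref{eq def srb one dimensional} gives a $\sigma$-finite $\mapalpha$-invariant, ergodic, conservative measure $\srbalpha\ll\lambda$ with $\srbalpha(\indudomain)=\indusrbalpha(\indudomain)=1$. For (X2), any neighbourhood $O$ of $0$ contains $[0,\delta)$. The set $I\setminus[0,\delta)$ is covered by $\indudomain$ together with finitely many of the sets $f_{\alpha,1}^{-j}(\indudomain)\cap I_1$ lying to the right of $\delta$. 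Each of these has measure at most $\srbalpha(\indudomain)$, because it is a preimage level of the tower. This gives finiteness.

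\textbf{Step 2: the tail expansion (X3'); this is the main obstacle.} Write $\yan:=f_{\alpha,1}^{-n}(\text{right endpoint of }I_1)$, or more conveniently the branch preimages of $1$ under $f_{\alpha,1}$. We have $\{\returnalpha>n\}=\bigcup_{i\ge2}f_{\alpha,i}^{-1}([0,y_{\alpha,n}))$ up to null sets, so
\[
\srbalpha(\{\returnalpha>n\})=\sum_{i=2}^m\int_{0}^{y_{\alpha,n}}\indudensityalpha(f_{\alpha,i}^{-1}(t))\,|(f_{\alpha,i}^{-1})'(t)|\,dt .
\]
The key lemma (the paper's Lemma \ref{lemma estimate yn}) is a uniform-in-$\alpha$ asymptotic
\[
y_{\alpha,n}=(\alpha b_\alpha n)^{-1/\alpha}\bigl(1+O(n^{-\epsilon'})\bigr).
\]
I would prove it from (f4) via the standard computation. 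Integrating (f4) gives $f_{\alpha,1}(x)=x+b_\alpha x^{1+\alpha}+O(x^{1+\alpha+\epsilon})$. Then the quantity $y^{-\alpha}$ satisfies $y_{n}^{-\alpha}=y_{n+1}^{-\alpha}-\alpha b_\alpha+O(y_{n+1}^{\epsilon})$. Summing telescopically yields $y_n^{-\alpha}=\alpha b_\alpha n+O(n^{1-\epsilon/\alpha_+})$, with all constants uniform because $b_\alpha$ is continuous and positive.

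Next, $\indudensityalpha$ is $C^2$, uniformly in $\alpha$ by the continuity in Theorem \ref{thm korepanov}, and the branches are $C^3$ by (f2). So the integrand equals its value at $0$ plus $O(t)$, and we obtain
\[
\srbalpha(\{\returnalpha>n\})=c_\alpha n^{-1/\alpha}+O(n^{-1/\alpha-\epsilon''}),
\]
with $c_\alpha$ as stated. Continuity of $c_\alpha$ follows from continuity of $\alpha\mapsto\indudensityalpha$ in $C^2$ together with (f5). Finally, $v(\alpha)=1/\alpha$ gives $v'_-(1)=-1$.

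\textbf{Step 3: Hölder potentials.} Put $\psi=\phi-\phi(0)$, so $|\psi(x)|\le Cx^\eta$. On the level set $\{$points at depth $j$ in $I_1\}$, i.e. $[y_{\alpha,j+1},y_{\alpha,j}]$, the measure $\srbalpha$ of this piece is at most $\srbalpha(\{\returnalpha>j\})\lesssim j^{-1/\alpha}$. Hence
\[
\int|\psi|\,d\srbalpha\lesssim\sum_j j^{-\eta/\alpha}j^{-1/\alpha}+O(1),
\]
which is finite uniformly for $\alpha$ in a neighbourhood of $1$ once $(1+\eta)/\alpha>1+\eta/2$. This gives integrability at $\alpha=1$ and a uniformly summable dominating tail.

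For the limit, write $\int\psi\,d\srbalpha=\sum_{n\ge0}\int_{\{\returnalpha>n\}}\psi\circ\mapalpha^{n}\,\indudensityalpha\,d\indulebesgue$. Equivalently, in terms of densities, $\int\psi\,d\srbalpha=\sum_n\int_{\{\returnalpha>n\}}\psi(\mapalpha^n x)\,\indudensityalpha(x)\,dx$. Each term is continuous in $\alpha$: $\indudensityalpha$ is continuous in $C^2$, and the inverse branches $F^{-1}_{\alpha,(i,k)}$ and the points $y_{\alpha,n}$ vary continuously by (f5) and (A4). The uniform bound $|\psi(\mapalpha^n x)|\lesssim y_{\alpha,n-1}^\eta\lesssim n^{-\eta/\alpha}$ on $\{\returnalpha>n\}$ combines with the tail bound $n^{-1/\alpha}$ to give a summable majorant uniform in $\alpha\in[1-\delta,1]$. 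Dominated convergence over $n$ then yields the limit.
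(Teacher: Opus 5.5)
The decisive gap is in the final dominated-convergence step of Step 3, which rests on a false bound. For $n\ge 1$, on $\{\tau_\alpha>n\}\subset\mathcal{D}$ it is $f_\alpha(x)$, not $f_\alpha^n(x)$, that lies in $[0,y_{\alpha,n}]$. Indeed $f_\alpha^n(\{\tau_\alpha>n\})=f_{\alpha,1}^{n-1}([0,y_{\alpha,n}])=[0,y_{\alpha,1}]$. A point with $\tau_\alpha(x)=k>n$ has $f_\alpha^n(x)\in[y_{\alpha,k-n+1},y_{\alpha,k-n}]$, so its distance from $0$ is governed by the remaining time $k-n$ until return, not by $n$. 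On $\{\tau_\alpha=n+1\}$, for instance, $f_\alpha^n(x)$ lies in the top level $[y_{\alpha,2},y_{\alpha,1}]$ of $I_1$, and $|\psi(f_\alpha^n(x))|$ need not be small.

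So the bound $|\psi\circ f_\alpha^n|\lesssim n^{-\eta/\alpha}$ on $\{\tau_\alpha>n\}$ fails. With only the tail estimate, the $n$-th term is $O(\tilde\nu_\alpha(\{\tau_\alpha>n\}))=O(n^{-1/\alpha})$, which is not summable at $\alpha=1$, so the interchange of limit and sum is unjustified. The terms are in fact $O(n^{-(1+\eta)/\alpha})$, but showing this needs control of the individual levels $\tilde\nu_\alpha(\{\tau_\alpha=k\})$, for instance \eqref{eq estimate G} or a summation by parts against the full expansion in (X3'), not just the tail upper bound.

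The paper avoids this by using the transfer operator to write $\int\psi\,d\nu_\alpha=\int g(\alpha,\cdot)\,d\tilde\lambda$. This parametrises each excursion backwards from the return point, so the pointwise bound $|\psi(f_{\alpha,1}^{-\ell}(x))|\lesssim y_{\alpha,\ell}^{\eta}\lesssim \ell^{-\eta/\alpha}$ is genuinely true. Combined with $\|G_{\alpha,(i,k)}\|_\infty\lesssim k^{-1-1/\alpha}$, it gives a majorant uniform for $\alpha\le 1$. Alternatively, you could run dominated convergence over the level sets $[y_{\alpha,j+1},y_{\alpha,j}]$ you already used for integrability. There your majorant $j^{-(1+\eta)/\alpha}$ is correct, and you would then need to show that each term $\int_{\{\tau_\alpha>j\}}\psi\circ f_{\alpha,1}^{-j}\circ F_\alpha\,d\tilde\nu_\alpha$ is continuous in $\alpha$.

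A lesser gap is in your sketch of Lemma~\ref{lemma estimate yn}. Telescoping only gives $y_{\alpha,n}^{-\alpha}=1+\alpha b_\alpha n+O\bigl(\sum_{k=1}^{n}y_{\alpha,k}^{\epsilon}\bigr)$. Replacing the error by $O(n^{1-\epsilon/\alpha_+})$ presupposes the uniform bound $y_{\alpha,k}\lesssim k^{-1/\alpha}$ that you are trying to prove. The paper spends most of that proof on exactly this point: it first shows $\sup_\alpha y_{\alpha,k}\to 0$ (claim \eqref{eq claim}), hence $y_{\alpha,n}^{-\alpha}\asymp n$ uniformly, and only then bootstraps. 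You could get $\sup_\alpha y_{\alpha,k}\to0$ more quickly from Dini's theorem, since $y_{\alpha,k}$ decreases in $k$, is continuous in $\alpha$ by (f5), and tends to $0$ for each fixed $\alpha$. Some such step must be supplied.

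The rest of your outline is sound and in places more self-contained than the paper. You use tower levels for (X2) and the level-set estimate for $\int|\phi-\phi(0)|\,d\nu_1<\infty$, where the paper cites Thaler's explicit density, and you check continuity of $c_\alpha$, which the paper does not. As an aside, your density formula, like the paper's, drops the factor $\lambda(\mathcal{D})^{-1}$ coming from $\tilde\lambda$. Since $\nu_\alpha|_{\mathcal{D}}=h_\alpha\,d\tilde\lambda$, the constant in (X3') is really $\lambda(\mathcal{D})^{-1}c_\alpha$; this is harmless for (X3') itself but changes the explicit constant.
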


\section{Proof of Theorem \ref{thm main abstract}}\label{sec proof sec 3}
In this section, we give the proof of Theorem \ref{thm main abstract}. 
Let $0<\alpha_-<\alpha_+$. For a family of maps $\{\mapalpha\}_{\alpha\in[\alpha_-,\alpha_+]}$ satisfying (f1)-(f4) and $\alpha\in [\alpha_-,\alpha_+]$ we define $y_{\alpha,0}:=1$ and, for all $n\in\mathbb{N}$, 
\begin{align}\label{eq def y}
    y_{\alpha,n}:=\map_{\alpha,1}^{-1}(y_{\alpha,n-1}).
\end{align}
\begin{lemma}\label{lemma estimate yn}
    Let $0<\alpha_-<\alpha_+$. Let $\{\mapalpha\}_{\alpha\in[\alpha_-,\alpha_+]}$ be  a family of maps satisfying (f1)-(f4). Then, we have
    \begin{align*}
        \yan-\left(\frac{1}{\alpha b_\alpha n}\right)^{1/\alpha}
        = 
        O\left(\frac{1}{n^{(1+\epsilon)/\alpha}}\right)
    \end{align*}
\end{lemma}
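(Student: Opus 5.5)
We follow the standard argument for the asymptotics of the preimage sequence of a neutral fixed point, as in \cite{LSB}, keeping every constant uniform in $\alpha\in[\alpha_-,\alpha_+]$.

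\emph{Step 1: the map near $0$.} Integrate \eqref{eq expansion} from $0$ to $x$, using $f_{\alpha,1}(0)=0$, to get
\[
f_{\alpha,1}(x)=x+b_\alpha x^{1+\alpha}+O(x^{1+\alpha+\epsilon})
\]
uniformly in $\alpha$. Here $b_\alpha$ is bounded above and below, because $\alpha\mapsto b_\alpha$ is continuous and positive on a compact interval. Since $\yanp=f_{\alpha,1}^{-1}(\yan)$, this gives
\[
\yan=\yanp+b_\alpha\yanp^{1+\alpha}+O(\yanp^{1+\alpha+\epsilon}).
\]

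\emph{Step 2: a priori decay.} We first show $\yan\to0$ uniformly in $\alpha$ and that $\yan\asymp n^{-1/\alpha}$ uniformly. The sequence $\yan$ is decreasing, and its limit is a fixed point of $f_{\alpha,1}$. By (f3), $f_{\alpha,1}'>1$ away from $0$, so the only fixed point is $0$. To make this uniform, use joint continuity of $f'_{\alpha,1}$ from (f2): on $[\delta,1]\times[\alpha_-,\alpha_+]$ the derivative is bounded below by some $1+\kappa_\delta$. Hence for each $\delta>0$ the number of steps needed to enter $[0,\delta]$ is bounded independently of $\alpha$. Choose $\delta$ small enough that the $O$-term in Step 1 is at most $\tfrac12 b_\alpha x^{1+\alpha}$. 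The comparison argument in Step 3 then yields two-sided bounds $C^{-1}n^{-1/\alpha}\le\yan\le Cn^{-1/\alpha}$, with the bounded number of initial steps absorbed into $C$.

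\emph{Step 3: the $u$-substitution.} Set $\uan:=\yan^{-\alpha}$. A Taylor expansion of $(1+t)^{-\alpha}$ with $t=b_\alpha\yanp^{\alpha}+O(\yanp^{\alpha+\epsilon})$ gives
\[
\uan=\uanp\bigl(1-\alpha b_\alpha \yanp^\alpha+O(\yanp^{\alpha+\epsilon})+O(\yanp^{2\alpha})\bigr),
\]
that is,
\[
\uanp-\uan=\alpha b_\alpha+O(\yanp^{\epsilon}).
\]
The term $O(\yanp^{2\alpha})$ is absorbed because we assumed $\epsilon<\alpha_-$. All constants are uniform in $\alpha$, since $\alpha$ ranges over a compact set away from $0$ and $\yan\le1$.

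\emph{Step 4: summation and inversion.} By Step 2, $O(\yanp^{\epsilon})=O(n^{-\epsilon/\alpha})$. Summing over $n$ gives
\[
\uan=\alpha b_\alpha n+O\Bigl(1+\sum_{k\le n}k^{-\epsilon/\alpha}\Bigr)=\alpha b_\alpha n+O(n^{1-\epsilon/\alpha})
\]
for $\epsilon<\alpha$; the borderline exponents only cost harmless extra terms. Therefore
\[
\yan=(\alpha b_\alpha n)^{-1/\alpha}\bigl(1+O(n^{-\epsilon/\alpha})\bigr)^{-1/\alpha},
\]
and so
\[
\yan-(\alpha b_\alpha n)^{-1/\alpha}=O\bigl(n^{-1/\alpha}\,n^{-\epsilon/\alpha}\bigr)=O\bigl(n^{-(1+\epsilon)/\alpha}\bigr),
\]
as claimed.

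\emph{Main obstacle.} The main difficulty is keeping all constants uniform in $\alpha$, mostly in Step 2: the initial phase before the orbit enters the small neighborhood where the expansion is valid, and the a priori two-sided bound $\yan\asymp n^{-1/\alpha}$. The other steps are routine Taylor estimates. For Step 2 we would first try a compactness argument via joint continuity, together with a monotone comparison with the explicit recursions $x\mapsto x+\tfrac12 b_\alpha x^{1+\alpha}$ and $x\mapsto x+2b_\alpha x^{1+\alpha}$.
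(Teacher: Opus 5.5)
Your proposal is correct and follows essentially the same route as the paper. Both arguments Taylor-expand in $u_{\alpha,n}=y_{\alpha,n}^{-\alpha}$ to get increments $\alpha b_\alpha+O(y_{\alpha,n+1}^{\epsilon})$ (using $\epsilon<\alpha_-$ to absorb the quadratic term), obtain a uniform a priori bound $u_{\alpha,n}\asymp n$ from a uniformly bounded entry time into a small neighbourhood of $0$ (via joint continuity of $f'_{\alpha,1}$ and $f'_{\alpha,1}>1$ away from $0$), and then sum and invert. The only difference is cosmetic: the paper gets $u_{\alpha,n}\asymp n$ by showing the averaged error in the telescoped sum is small, whereas you bound each increment above and below once the orbit is near $0$, which is the same mechanism.
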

\begin{proof}
    We notice that by the continuity of the function $\alpha \in[\alpha_-,\alpha_+]\mapsto b_\alpha$ (see (f4)), for all $\alpha\in[\alpha_-,\alpha_+]$ we have 
    \begin{align}\label{eq bounded b}
       0<\underline{b}:=\min\{b_\alpha:\alpha\in [\alpha_-,\alpha_+]\} \leq b_\alpha \leq\max\{b_\alpha:\alpha\in [\alpha_-,\alpha_+]\}=: \overline{b}<\infty.
    \end{align}
    Let $\alpha\in[\alpha_-,\alpha_+]$ and let $n\in\mathbb{N}$.
    By \eqref{eq expansion}, for all $x\in[0,1]$ we have 
    $
    f_{\alpha,1}(x)=x+b_\alpha x^{1+\alpha}+O(x^{1+\alpha+\epsilon}).
    $ Therefore, by the definition of $\yan$,  we obtain
    \[
    {\yan}^{-1}={\yanp}^{-1}(1+b_\alpha \yanp^{\alpha}+O(\yanp^{\alpha+\epsilon}))^{-1}.
    \]
    Thus, by applying Taylor's theorem to the function $x\mapsto(1+x)^{-1}$ at $x=0$ and using \eqref{eq bounded b},  we obtain
    ${\yan}^{-1}={\yanp}^{-1}(1-b_\alpha \yanp^{\alpha}+O(\yanp^{\alpha+\epsilon})) 
    $
    and hence, 
    \[
    \uan=\uanp\left(1-{b_\alpha}{\uanp^{-1}}+O\left({\uanp^{-(1+\epsilon/\alpha)}}\right)\right)^\alpha,
    \text{ where }\uan:=\yan^{-\alpha}.
    \]
    Moreover, by applying Taylor's theorem to the function $x\mapsto x^{\alpha}$ at $x=1$ and using \eqref{eq bounded b} and the boundedness of the set $[\alpha_-,\alpha_+]$, we obtain
    \[
    \uan=\uanp\left(1-\alpha b_\alpha \uanp^{-1}+O\left({\uanp^{-(1+\epsilon/\alpha)}}\right)\right)
    \]
   By applying a telescoping argument, for all $n\in\mathbb{N}$ we obtain 
\begin{align}\label{eq n and u expansion}
    \uan=\sum_{k=0}^{n-1}
(u_{\alpha,k+1}-u_{\alpha,k})+u_{\alpha,0}=n\alpha b_\alpha+O\left(\sum_{k=0}^{n-1}{u_{\alpha,k+1}^{-\epsilon/\alpha}}\right)+1.
\end{align}
We shall show that there exists $N\in\mathbb{N}$ and $C_0>1$ such that for all $\alpha\in [\alpha_-,\alpha_+]$ and $n\geq N$ we have 
\begin{align}\label{eq n and u}
   \frac{1}{C_0} \leq \frac{\uan}{n}\leq C_0.
\end{align}
By \eqref{eq bounded b}, for all $\alpha\in [\alpha_-,\alpha_+]$ we have
\begin{align}
    \underline{b} \alpha_-\leq b_\alpha\alpha\leq \overline{b}\alpha_+.
\end{align}
By the definition of $O$, there exists $D>1$ such that for all $n\in\mathbb{N}$ and $\alpha\in[\alpha_-,\alpha_+]$ we have
\begin{align}\label{eq big O sum}
    \left|O\left(\sum_{k=0}^{n-1}{u_{\alpha,k+1}^{-\epsilon/\alpha}}\right)\right|\leq D\sum_{k=0}^{n-1}{u_{\alpha,k+1}^{-\epsilon/\alpha}}=D\sum_{k=1}^{n} y_{\alpha,k}^{\epsilon}. 
\end{align}
We take a small positive number $\xi$ satisfying $0<\xi<\min\{1,\underline{b}\alpha_-/(4D)\}$.
By the mean value theorem, for all $\alpha\in[\alpha_-,\alpha_+]$ there exists $x_\alpha\in (0,\xi)$ such that 
\begin{align*}
    f_{\alpha,1}^{-1}(\xi)=f_{\alpha,1}^{-1}(\xi)-f_{\alpha,1}^{-1}(0)=(f_{\alpha,1}^{-1})'(x_\alpha)\xi=\left(f_{\alpha,1}'(f_{\alpha,1}^{-1}(x_\alpha))\right)^{-1}\xi.
\end{align*}
Therefore, by the joint continuity of the function defined by \eqref{eq jointly}, there exists $\eta>0$ such that for all $\alpha\in [\alpha_-,\alpha_+]$ we have 
\begin{align}\label{eq proof def eta}
    \eta<f_{\alpha,1}^{-1}(\xi).
\end{align}
Again, by the joint continuity of the function defined by \eqref{eq jointly}, Berge's Maximum Theorem yields that the function 
\[
\alpha\in[\alpha_-,\alpha_+]\mapsto \varphi(\alpha):= \min\left\{ f_{\alpha,1}'(x):x\in \overline{I_1}\setminus[0,\eta)\right\}.
\]
is continuous. Since for all $x\in \overline{I_1}\setminus\{0\}$ we have $ f_{\alpha,1}'(x)>1$, this yields that 
\begin{align}\label{eq partially expanding}
1<s:=\min\{\varphi(\alpha):\alpha\in [\alpha_-,\alpha_+]\}.    
\end{align}
We will show that 
\begin{align}\label{eq claim}
&\text{
    there exist $ L \geq 1$ such that for all $k\geq  L $ and $\alpha\in[\alpha_-,\alpha_+]$}
    \\&\text{ we have $y_{\alpha,k}\in [0,2\xi)$.} \nonumber
\end{align}
For a contradiction, we assume that the claim does not hold. 
Then, there exist a sequence $\{n_{j}\}_{j\in\mathbb{N}}$ and $\{\alpha_{j}\}_{j\in\mathbb{N}}\subset [\alpha_-,\alpha_+]$ such that for all $j\in\mathbb{N}$ we have $n_j<n_{j+1}$ and 
\begin{align}\label{eq hyp contra}
y_{\alpha_j,n_j}\geq2\xi. 
\end{align}
Notice that, by (f3), for all $\alpha\in[\alpha_-,\alpha_+]$ and $x\in[0,1]$ we have 
\begin{align}\label{eq ine alpha,1 inverse}
    f_{\alpha,1}^{-1}(x)<x.
\end{align} 
For all $j\in\mathbb{N}$ and $1\leq k\leq n_j$ we have 
\begin{align}\label{eq exponential convergence}
   y_{\alpha_j,k}-f^{-1}_{\alpha_j,1}(\xi)\leq s^{-k}.
\end{align}
The proof of this claim proceeds as follows: Let $j\in\mathbb{N}$. By \eqref{eq def y} and the mean value theorem, there exists $x_{j,1}\in (\xi, 1)$ such that
\[
y_{\alpha_j,1}-f^{-1}_{\alpha_j,1}(\xi)=f^{-1}_{\alpha_j,1}(1)-f^{-1}_{\alpha_j,1}(\xi)=(f^{-1}_{\alpha_j,1})'(x_{j,1})(1-\xi).
\]
Since $f^{-1}_{\alpha_j,1}(x_{j,1})\in (f^{-1}_{\alpha_j,1}(\xi), y_{\alpha_j,1})\subset \overline{I_1}\setminus[0,\eta)$ by \eqref{eq proof def eta}, we obtain 
\[
y_{\alpha_j,1}-f^{-1}_{\alpha_j,1}(\xi)=(f^{-1}_{\alpha_j,1})'(x_{j,1})(1-\xi)
=\left({f_{\alpha_j,1}'(f^{-1}_{\alpha_j,1}(x_{j,1}))}\right)^{-1}(1-\xi)\leq s^{-1}.
\]
Let $1\leq k\leq n_j-1$
Suppose that \eqref{eq exponential convergence} holds for $k$.
By \eqref{eq def y} and the mean value theorem, there exists $x_{j,k+1}\in (\xi, y_{\alpha_j,k})$ such that
\[
y_{\alpha_j,k+1}-f^{-1}_{\alpha_j,1}(\xi)=f^{-1}_{\alpha_j,1}(y_{\alpha_j,k})-f^{-1}_{\alpha_j,1}(\xi)=(f^{-1}_{\alpha_j,1})'(x_{j,{k+1}})(y_{\alpha_j,k}-\xi).
\]
Since $f^{-1}_{\alpha_j,1}(x_{j,k+1})\in (f^{-1}_{\alpha_j,1}(\xi), y_{\alpha_j,k+1})\subset \overline{I_1}\setminus[0,\eta)$ by \eqref{eq proof def eta}, \eqref{eq ine alpha,1 inverse} implies that 
\[
y_{\alpha_j,k+1}-f^{-1}_{\alpha_j,1}(\xi)=
\left(f_{\alpha_j,1}'(f_{\alpha_j,1}^{-1}(x_{j,{k+1}}))\right)^{-1}(y_{\alpha_j,k}-\xi)\leq s^{-1}(y_{\alpha_j,k}-f^{-1}_{\alpha_j,1}(\xi)).
\]
Therefore, by the induction hypothesis, we obtain $y_{\alpha_j,k+1}-f^{-1}_{\alpha_j,1}(\xi)\leq s^{-(k+1)}$. Hence, by the induction, we obtain \eqref{eq exponential convergence} for all $1\leq k\leq n_j$. 

Since $n_j<n_{j+1}$ for all $j\in\mathbb{N}$, there exists $J\in\mathbb{N}$ such that $s^{-n_J}<\xi$. Therefore, by \eqref{eq hyp contra} and \eqref{eq ine alpha,1 inverse}, we obtain
\[
\xi=2\xi-\xi<y_{\alpha_J,n_J}-f^{-1}_{\alpha_J,1}(\xi)\leq s^{-n_J}<\xi.
\]
This is a contradiction. Thus, we obtain \eqref{eq claim}.

Let $N_0$ be a positive integer satisfying
${L}\leq \xi N_0.$
Then, by \eqref{eq big O sum} and \eqref{eq claim}, for all $n\geq N_0$ and $\alpha\in[\alpha_-,\alpha_+]$ we obtain
\begin{align*}
\frac{1}{n}
    \left|O\left(\sum_{k=0}^{n-1}{u_{\alpha,k+1}^{-\epsilon/\alpha}}\right)\right|
    \leq D\left(
    \sum_{k=1}^{L}\frac{y_{\alpha,k}^{\epsilon}}{n}
    +\sum_{k=L+1}^{n}\frac{y_{\alpha,k}^{\epsilon}}{n}\right)
    \leq D\left(\frac{L}{n}+2\xi\right) 
    \leq 3D\xi.
\end{align*}
By \eqref{eq n and u expansion}, this implies that for all $n\geq N_0$ and $\alpha\in[\alpha_-,\alpha_+]$ we have
\[
0<\alpha_- \underline{b}-4D\xi\leq \frac{\uan}{n}\leq \alpha_+ \overline{b} +4D\xi.
\]
and thus, we obtain \eqref{eq n and u}. Therefore, since $\epsilon<\alpha_-$ we obtain 
$O(\sum_{k=0}^{n-1}{u_{\alpha,k+1}^{-\epsilon/\alpha}})=O(\sum_{k=1}^{n}k^{-\epsilon/\alpha})=O(n^{1-\epsilon/\alpha})$. 
Combining this with \eqref{eq n and u expansion} and applying Taylor's theorem to the function $x\mapsto x^{1/\alpha}$ at $x=1$, for all $n\in\mathbb{N}$ and $\alpha\in[\alpha_-,\alpha_+]$ we obtain
\[
\yan=\left(\frac{1}{\alpha b_\alpha n}\right)^{1/\alpha}\left(\frac{1}{1+O(n^{-\epsilon/\alpha})}\right)^{1/\alpha}=\left(\frac{1}{\alpha b_\alpha n}\right)^{1/\alpha}\left({1+O(n^{-\epsilon/\alpha})}\right).
\]
\end{proof}

\begin{prop}\label{prop expansion of tail}
 Let $0<\alpha_-<\alpha_+$. Let $\{\mapalpha\}_{\alpha\in[\alpha_-,\alpha_+]}\in \maps(I,\alpha_-,\alpha_+)$. Then, for all $\alpha\in [\alpha_-,\alpha_+]$ and $n\in\mathbb{N}$ 
 we have 
 \[
 \srbalpha(\{\returnalpha>n\})=\frac{\sum_{i=2}^m\indudensityalpha(f_{\alpha,i}^{-1}(0))|(f_{\alpha,i}^{-1})'(0)|}{(\alpha b_\alpha)^{1/\alpha}}n^{-1/\alpha}+O(n^{-(1+\epsilon)/\alpha}).
 \]
 \begin{proof}
     Let $\alpha\in [\alpha_-,\alpha_+]$ and let $n\in\mathbb{N}$. Since $\overline{\{\returnalpha>n\}}=\cup_{i=2}^mf_{\alpha,i}^{-1}([0,\yan])$, we have 
     \begin{align}\label{eq pr prop zero}
        & \srbalpha(\{\returnalpha>n\})=
         \sum_{i=2}^m X_{\alpha,n,i} 
         +\sum_{i=2}^mY_{\alpha,n,i}, \text{ where } 
     \\&X_{\alpha,n,i}:=
         \int_{f_{\alpha,i}^{-1}([0,\yan])}\left(\indudensityalpha(x)-\indudensityalpha\left(\map_{\alpha,i}^{-1}(0)\right)\right) dx
         \text{ and }\nonumber
        \\& Y_{\alpha,n,i}:=\indudensityalpha\left(\map_{\alpha,i}^{-1}(0)\right)|f_{\alpha,i}^{-1}(\yan)-f_{\alpha,i}^{-1}(0)|.\nonumber
     \end{align}
     By the mean value theorem, for each $2\leq i\leq m$ there exists $z_{\alpha,n,i}\in [0,\yan]$ such that $|f_{\alpha,i}^{-1}(\yan)-f_{\alpha,i}^{-1}(0)|=|(f_{\alpha,i}^{-1})'(z_{\alpha,n,i})|\yan$ 
     Since for each $2\leq i\leq m$ the sign $\text{sgn}((f_{\alpha,i}^{-1})')$ of $(f_{\alpha,i}^{-1})'$ is constant, for each $2\leq i\leq m$ we obtain
     \begin{align*}
         Y_{\alpha,n,i}
         &=\indudensityalpha\left(\map_{\alpha,i}^{-1}(0)\right)|(f_{\alpha,i}^{-1})'(z_{\alpha,n,i})|\yan
     \\&=\indudensityalpha\left(\map_{\alpha,i}^{-1}(0)\right)
     \text{sgn}((f_{\alpha,i}^{-1})')\left((f_{\alpha,i}^{-1})'(z_{\alpha,n,i})-(f_{\alpha,i}^{-1})'(0)\right)
     \yan
     \\&+
     \indudensityalpha\left(\map_{\alpha,i}^{-1}(0)\right)|(f_{\alpha,i}^{-1})'(0)|\yan.\nonumber
     \end{align*}
     By Theorem \ref{thm korepanov}, $D_0:=\sup_{\alpha\in [\alpha_-,\alpha_+]}\|\indudensityalpha\|_{C^2(\indudomain)}<\infty$.
The joint continuity of the function defined by \eqref{eq jointly twice} implies that 
\[
D_1:=\sup_{2\leq i\leq m}\sup_{\alpha\in[\alpha_-,\alpha_+]}\sup_{x\in I}|(f_{\alpha,i}^{-1})''(x)|<\infty.
\]
Thus, by the mean value theorem, we obtain 
\begin{align*}
\sum_{i=2}^m\indudensityalpha\left(\map_{\alpha,i}^{-1}(0)\right)|(f_{\alpha,i}^{-1})'(z_{\alpha,n,i})-(f_{\alpha,i}^{-1})'(0)|\yan\leq D_0D_1\sum_{i=2}^m\yan^2
    \end{align*}
which yields that 
\begin{align}\label{eq Y}
\sum_{i=2}^mY_{\alpha,n,i}
    =\sum_{i=2}^m\indudensityalpha\left(\map_{\alpha,i}^{-1}(0)\right)|(f_{\alpha,i}^{-1})'(0)|\yan+O(\yan^2).
\end{align}
     Moreover, the joint continuity of the function defined by \eqref{eq jointly} implies that $D_2:=\sup_{2\leq i\leq m}\sup_{\alpha\in [\alpha_-,\alpha_+]}\sup_{x\in [0,1]}|(f^{-1}_{\alpha,i})'(x)|<\infty$
     Thus, by the mean value theorem, for all $2\leq i\leq m$ we obtain 
     \begin{align*}
     &
         |X_{\alpha,n.i}|
         \leq D_0\int_{f_{\alpha,i}^{-1}([0,\yan])}\left|x-\map_{\alpha,i}^{-1}(0)\right| dx
         =\frac{D_0}{2}|f_{\alpha,i}^{-1}(\yan)-f_{\alpha,i}^{-1}(0)|^2\leq \tilde D \yan^2,
         \nonumber
     \end{align*}
     where $\tilde D:=D_0D_2^2$.
     This implies that $\sum_{i=2}^mX_{\alpha,n,i}=O(\yan^{2})$. Combining this with \eqref{eq pr prop zero} and \eqref{eq Y}, we obtain 
     \begin{align*}
         \indusrbalpha(\{\returnalpha>n\})=\sum_{i=2}^m\indudensityalpha\left(\map_{\alpha,i}^{-1}(0)\right)|(f_{\alpha,i}^{-1})'(0)|\yan+O(\yan^2).
     \end{align*}
Since $\epsilon<\alpha_-<1$, Lemma \ref{lemma estimate yn} implies that $O(\yan^2)=O(n^{-2/\alpha})=O(n^{-(1+\epsilon)/\alpha})$. Thus, Lemma \ref{lemma estimate yn} completes the proof.
      \end{proof}
\end{prop}

\emph{Proof of Theorem \ref{thm main abstract}.}
 Let $\alpha_-<1$ and let $\alpha_+>1$. Let $\{\mapalpha\}_{\alpha\in[\alpha_-,\alpha_+]}\in \maps(I,\alpha_-,\alpha_+)$ and let $\phi:I\rightarrow  \mathbb{R}$ be H\"older continuous.
 
 We first show that $\{\mapalpha\}_{\alpha\in[\alpha_-,\alpha_+]}$ satisfies conditions (X1), (X2) and (X3') in Section \ref{sec proof sec 2}. 
It is well known (see, for example, \cite{Thaler1983, Thaler1995}) that 
$\srbalpha$ is $\sigma$-finite, equivalent to $\lambda$, conservative, ergodic and invariant with respect to $\mapalpha$.  Moreover, for $1\leq\alpha\leq \alpha_+$   
the measure $\srbalpha$ can be written as 
    \begin{align}\label{eq pr formula of phisical}
    \srbalpha=h_\alpha^*(x)\frac{x}{x-f_{1,1}^{-1}(x)} d\lambda(x),
    \end{align}
where $h_\alpha^*$ is continuous and positive on $I$.
Thus, $\{\mapalpha\}_{\alpha\in[\alpha_-,\alpha_+]}$ satisfies conditions (X1) and (X2). (X3') follows from Proposition \ref{prop expansion of tail}.

Next, we shall show that 
\begin{align}\label{eq final}
    \int|\phi-\phi (0)| d\srb<\infty \text{ and }\lim_{\alpha\to 1-0}\int (\phi-\phi(0)) d\srbalpha=\int(\phi-\phi(0)) d\srb.
\end{align}
     By \eqref{eq expansion} and  applying Taylor's theorem to the function $x\mapsto(1+x)^{-1}$ at $x=0$, we obtain 
\[
(f_{1,1}^{-1})'(x)=\frac{1}{f_{1,1}'(f_{1,1}^{-1}(x))}=\frac{1}{1+2b_1x+O(x^{1+\epsilon})}=1-2b_1x+O(x^{1+\epsilon}).
\]
Therefore, we obtain $f_{1,1}^{-1}(x)=x-b_1x^2+O(x^{2+\epsilon})$ and thus, 
\begin{align}\label{eq pr L1}
\frac{x}{x-f_{1,1}^{-1}(x)}=\frac{1}{b_1x+O(x^{1+\epsilon})}.
\end{align}
Since $\phi$ is H\"older continuous, there exist $\eta>0$ and $D\geq 1$ such that for all $x\in I$ we have
\begin{align}\label{eq holder at 0}
    |\phi(x)-\phi(0)|\leq Dx^{\eta}.
\end{align}
Hence, by \eqref{eq pr formula of phisical} and \eqref{eq pr L1}, for all sufficiently small $0<z<1$ we obtain
\begin{align*}
    &\int _{0}^z\frac{|\phi(x)-\phi(0)|h^*_1(x)x}{x-f_{1,1}^{-1}(x)} dx
    \leq D_0\int^z_0\frac{x^\eta}{b_1x+O(x^{1+\epsilon})}dx
    \\&\leq 
    D_0\int^z_0\frac{1}{x^{1-\eta}(b_1+O(x^{\epsilon}))}dx
    \leq \frac{2D_0}{b_1} \int _0^z{x^{-1+\eta}
}dx=\frac{2D_0}{b_1\eta}z^\eta<\infty,
\end{align*}
where $D_0:=D\max_{x\in I}\{h^*_1(x)\}$. Since for all $0<z<1$ the function 
\[
x\mapsto h^*_1(x){x}(x-f_{1,1}^{-1}(x))^{-1}
\]
is continuous on $[z,1]$, we obtain 
\begin{align}\label{eq final L1}
    \int|\phi-\phi (0)| d\srb<\infty.
\end{align}

Since $\indumapalpha$ is  a first return map of $\mapalpha$, kac's formula implies that for each $\alpha \in[\alpha_-,1)$
\begin{align}\label{eq kac alpha}
\int (\phi-\phi(0)) d\srbalpha
=\int \sum_{j=0}^{\returnalpha-1}(\phi-\phi(0))\circ \mapalpha^{j}\cdot h_\alpha d\indulebesgue.
\end{align}
Moreover, by \eqref{eq final L1} and the conservativity of $\srb$, we obtain 
\begin{align}\label{eq kac 1}
    \int (\phi-\phi(0)) d\srb
=\int \sum_{j=0}^{\return_1-1}(\phi-\phi(0))\circ \map_1^{j}\cdot h_1 d\indulebesgue
\end{align}
By the definition of the Ruelle operator $P_\alpha$, for all $\alpha \in[\alpha_-,1]$ we obtain
\begin{align}\label{eq transfer operator}
&\int \sum_{j=0}^{\returnalpha-1}(\phi-\phi(0))\circ \mapalpha^{j}\cdot h_\alpha d\indulebesgue=
\int P_\alpha\left(\sum_{j=0}^{\returnalpha-1}(\phi-\phi(0))\circ \mapalpha^{j}\cdot h_\alpha\right) d\indulebesgue
\\&\nonumber=
\int \sum_{i=2}^m\sum_{k=1}^\infty
G_{\alpha,(i,k)}
\left(
\sum_{j=0}^{\returnalpha-1}(\phi-\phi(0))\circ \mapalpha^{j} \cdot h_\alpha
\right)\circ F^{-1}_{\alpha,(i,k)}
d\indulebesgue
\\&\nonumber=\int \sum_{i=2}^m\sum_{k=1}^\infty
G_{\alpha,(i,k)}
\left(
\sum_{j=0}^{k-1}(\phi-\phi(0))\circ \mapalpha^{j}\circ F^{-1}_{\alpha,(i,k)}
\right)
 h_\alpha
\circ F^{-1}_{\alpha,(i,k)}
d\indulebesgue.
\\&\nonumber=\int 
g(\alpha,x)
d\indulebesgue(x),
\end{align}
where 
\begin{align*}
    &g(\alpha,x):=
    \\&
  \sum_{i=2}^m\sum_{k=1}^\infty
G_{\alpha,(i,k)}(x)
\left(
\sum_{\ell=1}^{k-1}(\phi-\phi(0))\circ f_{\alpha,1}^{-\ell}
+
(\phi-\phi(0))\circ F^{-1}_{\alpha,(i,k)}
\right)\cdot h_\alpha\circ F^{-1}_{\alpha,(i,k)}(x).
\end{align*}
and $\sum_{\ell=1}^{0}(\phi-\phi(0))\circ f_{\alpha,1}^{-\ell}(x):=0$ for all $x\in \indudomain$.
By Proposition \ref{prop expansion of tail}, there exists $D_0>0$ such that for all $\alpha\in [\alpha_-,\alpha_+]$ and $n\in\mathbb{N}$ we have 
\begin{align}\label{eq comparability yan and n}
D_0^{-1}\leq \yan n^{1/\alpha}\leq D_0.
\end{align}
From this estimate and \eqref{eq expansion}, it is not difficult to derive (see, for example, the calculation in the proof of \cite[Lemma 5.3]{Korepanov}) that there exists a constant $D_1\geq1$ such that for all $\alpha\in [\alpha_-,\alpha_+]$, $2\leq k\leq m$ and $k\in\mathbb{N}$ we have 
\begin{align}\label{eq estimate G}
\|G_{\alpha,(i,k)}\|_\infty=\sup_{x\in \indudomain}\left|\frac{d}{dx}\left(f_{\alpha,i}^{-1}\circ f_{\alpha,1}^{-(k-1)}\right)(x)\right|\leq D_1\frac{1}{k^{1+1/\alpha}}.
\end{align}
Furthermore, by the continuity of the map $\alpha\mapsto h_\alpha$ (Teorem \ref{thm korepanov}) and \eqref{eq nonnegative}, there exists $D_2\geq 1$ such that 
\begin{align}\label{eq estimate h}
\sup_{\alpha\in [\alpha_-,\alpha_+]}\|h_\alpha\|_\infty\leq D_2.
\end{align}
By \eqref{eq def y}, \eqref{eq holder at 0} and \eqref{eq comparability yan and n}, for all $2\leq i\leq m$, $k\geq 2$ and $x\in \indudomain$ we obtain 
\begin{align*}
&\left|\sum_{\ell=1}^{k-1}(\phi-\phi(0))(f_{\alpha,1}^{-\ell}(x))\right|
\leq D\sum_{\ell=1}^{k-1}(f_{\alpha,1}^{-\ell}(x))^\eta \leq D\sum_{\ell=1}^{k-1} y_{\alpha,\ell}^\eta
\leq DD_0 \sum_{\ell=1}^{k-1}\frac{1}{\ell^{\eta/\alpha}}
\end{align*}
We take a small number $\delta>0$ with $\delta<\min\{1,\eta/\alpha\}$. Note that, by the integral test, there exists $D_3\geq 1$ such that for all $k\geq 2$ we have $\sum_{\ell=1}^{k-1}\ell^{-\delta}\leq D_3k^{-\delta+1}$.
Thus, for all $2\leq i\leq m$, $k\geq 2$ and $x\in \indudomain$ we obtain 
\begin{align*}
&\left|\sum_{\ell=1}^{k-1}(\phi-\phi(0))(f_{\alpha,1}^{-\ell}(x))\right|
\leq DD_0 \sum_{\ell=1}^{k-1}\frac{1}{\ell^{\eta/\alpha}}\leq DD_0\sum_{\ell=1}^{k-1}\frac{1}{\ell^{\delta}}\leq DD_0D_3k^{-\delta+1}.
\end{align*}
Combining this with \eqref{eq estimate G} and \eqref{eq estimate h}, for all $\alpha\in[\alpha_-,1]$ and $x\in\indudomain$ we obtain 
\begin{align}\label{eq estimate g}
  & |g(\alpha,x)|
\leq
\tilde D
\sum_{i=2}^m\sum_{k=1}^\infty
\frac{1}{k^{1+1/\alpha}}
\left( k^{-\delta+1}
+2\sup_{x\in I}|\phi|\right)
\\&\leq \tilde Dm\max\left\{1,2\sup_{x\in I}|\phi|\right\}\sum_{k=1}^\infty
\frac{1}{k^{1+\delta}}<\infty, \nonumber
\end{align}
where $\tilde D:=DD_0D_1D_2D_3$. 
Since for each $x\in \indudomain$, $2\leq i\leq m$ and $k\in\mathbb{N}$ the maps $\alpha\mapsto G_{\alpha,(i,k)}(x)$,
$\alpha\mapsto f_{\alpha,1}^{-1}(x)$,
$\alpha\mapsto F^{-1}_{\alpha,(i,k)}(x)$ and $\alpha \mapsto h_\alpha(x)$ are continuous on $[\alpha_-,\alpha_+]$ (see (f5) and Theorem \ref{thm korepanov}), the function $\alpha\mapsto g(\alpha,x)$ is continuous on $[\alpha_-,1]$ for each $x\in \indudomain$. 
Therefore, by \eqref{eq estimate g} and the Lebesgue dominated convergence theorem, we obtain
$
\lim_{\alpha\to 1-0}\int g(\alpha,x) d\indulebesgue(x)=\int g(1,x) d\indulebesgue(x).
$
Combining this with \eqref{eq kac alpha}, \eqref{eq kac 1} and \eqref{eq transfer operator}, we obtain 
\begin{align*}
    \lim_{\alpha\to 1-0}\int (\phi-\phi (0))d\srbalpha=\int (\phi -\phi(0))d\srb.
\end{align*}
This completes the proof of \eqref{eq final}. Thus, the proof of Theorem \ref{thm main abstract} is complete.
\qed


\section{Linear response for Pomeau–Manneville maps at the transition point}\label{sec PMmap}
In this section, we show that for Example \ref{example MP map} linear response holds for the SRB measure at the transition point but fails for the physical measure. Throughout this section, we use the notation introduced in Example \ref{example MP map} and Section \ref{sec example}.

For $\alpha>0$ we define the map $\lsvmapalpha:I\rightarrow{I}$ by \eqref{eq def lsv map intro}. Let $0<\alpha_-<\alpha_+$ and let $\alpha\in[\alpha_-,\alpha_+]$. Since $\indumapalpha:[1/2,1]$ is the first return map of $\lsvmapalpha$, we have 
\[
\srbalpha|_{[1/2,1]}:=\densityalpha d\lambda|_{[1/2,1]}=\indudensityalpha d\indulebesgue=:\indusrbalpha,  \text{where } \indulebesgue=2\lambda|_{[1/2,1]}
\]
(see \cite[Corollary 1.4.4]{viana}). Thus, since $\densityalpha$ is the Radon-Nikodym derivative of $\srbalpha$ with respect to $\lambda$, chosen to be continuous on $(0,1]$, we obtain $\densityalpha=2\indudensityalpha$. 
Therefore, by Bahsoun and Saussol \cite{BahsounSaussol}, Remark \ref{rem verify LSv map}, and Theorem \ref{thm main abstract}, we obtain the following:
\begin{thm}\label{thm main pomeau}
Let $\alpha_-<1$ and let $\alpha_+>1$. Let $\{\lsvmapalpha\}_{\alpha\in [\alpha_-,\alpha_+]}$ be the family of maps such that $\lsvmapalpha$ defined by \eqref{eq def lsv map intro}. Then, we have the following:
\begin{itemize}
    \item For each H\"older continuous potential $\psi$ on $I$ with $\psi(0)=0$ the map
\[
\alpha\mapsto R_{\text{SRB},\psi}(\alpha):=\int \psi d\srbalpha
\]
is differentiable at the transition point $1$.
\item For each H\"older continuous potential $\phi$ on $I$ the map
\[
\alpha\mapsto R_{\text{Phy},\phi}(\alpha):=\int \phi d\phisicalalpha
\]
is differentiable at the transition point $1$ if and only if 
\[
\int (\phi-\phi(0))d\nu_{1}=0.
\]
\end{itemize}
In addition, we have
\[
\lim_{\alpha\to1-0}\frac{R_{\text{Phy},\phi}(\alpha)-R_{\text{Phy},\phi}(1)}{\alpha-1}
=-\frac{8}{\density(1/2)}\int (\phi-\phi(0))d\nu_{1}.
\]
and 
\begin{align*}
\lim_{\alpha\to1-0}
\operatorname*{ess\,lim}_{n\to\infty}
\left|
\frac{\frac{1}{n}\sum_{i=0}^{n-1} \phi\circ\lsvmapalpha^i-\phi(0)}{\alpha-1}-\left(-\frac{8}{\density(1/2)}\int (\phi-\phi(0))d\srb\right)\right|=0.
\end{align*}

\end{thm}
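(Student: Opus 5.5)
The plan is to read off each assertion from Theorem \ref{thm main abstract}, Theorem \ref{thm main super abstract}, Remark \ref{rem compare srb and physical} and Corollary \ref{cor time average}, specialized to the family $\{\lsvmapalpha\}_{\alpha\in[\alpha_-,\alpha_+]}$. The first bullet is imported from Bahsoun and Saussol \cite{BahsounSaussol}.

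\emph{Step 1: the constants.} By Remark \ref{rem verify LSv map}, $\{\lsvmapalpha\}\in\maps(I,\alpha_-,\alpha_+)$ with $m=2$, $I_1=[0,1/2]$, $I_2=(1/2,1]$ and $b_\alpha=2^\alpha$. Theorem \ref{thm main abstract} therefore gives (X1), (X2) and (X3') with $v(\alpha)=1/\alpha$, $\alpha^*=1$ and $p_\alpha=0$. Hence $v_-'(1)=-1$. On $I_2$ we have $f_{\alpha,2}^{-1}(x)=(x+1)/2$, so $f_{\alpha,2}^{-1}(0)=1/2$ and $|(f_{\alpha,2}^{-1})'(0)|=1/2$. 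Using $\rho_1=2h_1$ (shown just before the statement), this yields
\[
c_1=\frac{h_1(1/2)\cdot\frac12}{(1\cdot 2)^{1}}=\frac{h_1(1/2)}{4}=\frac{\density(1/2)}{8}.
\]
Theorem \ref{thm main abstract} also shows that every H\"older $\phi$ satisfies $\int|\phi-\phi(0)|\,d\srb<\infty$ and the continuity hypothesis \eqref{eq continuity main theorem}, with $\phi_\alpha\equiv\phi$. Then \eqref{eq abstract main eq} gives
\[
\lim_{\alpha\to1-0}\frac{R_{\text{Phy},\phi}(\alpha)-R_{\text{Phy},\phi}(1)}{\alpha-1}=\frac{-\int(\phi-\phi(0))\,d\srb}{\density(1/2)/8},
\]
which is the stated formula.

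\emph{Step 2: the characterization for the physical measure.} For $\alpha\ge1$ we have $\phisicalalpha=\delta_0$, so $R_{\text{Phy},\phi}(\alpha)=\phi(0)$ and the right derivative at $1$ is $0$. Thus $R_{\text{Phy},\phi}$ is differentiable at $1$ if and only if the left derivative from Step 1 vanishes; this is \eqref{eq main n s condition strong} with $g(1)=0$. Since $\density(1/2)>0$ (the density is continuous and positive on $(0,1]$), this is equivalent to $\int(\phi-\phi(0))\,d\srb=0$.

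\emph{Step 3: the time averages.} The ess-lim statement is Corollary \ref{cor time average} with the constants of Step 1. Its extra hypothesis is that $\phisicalalpha$ is equivalent to $\lambda$ for $\alpha<1$. This holds because $\srbalpha$ is equivalent to $\lambda$ (\cite{Thaler1983, Thaler1995}, as recalled in the proof of Theorem \ref{thm main abstract}) and $\phisicalalpha$ is a positive multiple of $\srbalpha$.

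\emph{Step 4: the SRB statement, the main obstacle.} For the first bullet I would invoke \cite{BahsounSaussol}, which gives differentiability of $\alpha\mapsto\int\psi\,d\srbalpha$ at $\alpha=1$ for H\"older $\psi$ vanishing at $0$. The difficulty is that their measure may be normalized differently from ours, where $\srbalpha([1/2,1])=1$. I would handle this as follows:
\begin{itemize}
\item Write our normalized measure as their measure divided by its mass of $[1/2,1]$.
\item That mass is the integral of the indicator of $[1/2,1]$, which vanishes near $0$. If their theorem covers such observables (or after approximating the indicator by H\"older functions vanishing near $0$ and using a uniform derivative bound), this mass is differentiable and positive.
\item Differentiability of the normalized integral then follows from the quotient rule.
\end{itemize}
I expect this bookkeeping to be the only nonformal point: it requires checking that the precise hypotheses of \cite{BahsounSaussol} (regularity class of $\psi$, and the family $\lsvmapalpha$ with its factor $2^\alpha$) are met.
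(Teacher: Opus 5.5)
Your Steps 1--3 are the paper's own proof. The paper's argument consists of the identity $\rho_\alpha=2h_\alpha$ on $[1/2,1]$ plus a citation of Bahsoun--Saussol, Remark~\ref{rem verify LSv map} and Theorem~\ref{thm main abstract}, fed into Theorem~\ref{thm main super abstract}, Remark~\ref{rem compare srb and physical} and Corollary~\ref{cor time average}. Your check that $\mu_\alpha\sim\lambda$ and your normalization bookkeeping in Step~4 are details the paper leaves implicit.

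The one point that does not hold up is the value of $c_1$ in Step~1. The error is inherited from the paper, not introduced by you. The formula for $c_\alpha$ in Theorem~\ref{thm main abstract} comes from Proposition~\ref{prop expansion of tail}, whose proof writes $\nu_\alpha(\{\tau_\alpha>n\})=\sum_i\int_{f_{\alpha,i}^{-1}([0,y_{\alpha,n}])}h_\alpha\,dx$. But $h_\alpha$ is a density with respect to $\tilde\lambda=\lambda(\mathcal D)^{-1}\lambda|_{\mathcal D}$, not with respect to $\lambda$. So a factor $\lambda(\mathcal D)^{-1}$ is missing, and here that factor is $2$. Plugging $\rho_1=2h_1$ into that formula, as you do, therefore halves $c_1$.

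A direct computation avoids the problem. For $x\in(1/2,1]$ we have $\tau_1(x)>n$ iff $2x-1<y_{1,n}$, so $\{\tau_1>n\}=(1/2,(1+y_{1,n})/2)$. Lemma~\ref{lemma estimate yn} gives $y_{1,n}\sim 1/(2n)$, and $\rho_1$ is continuous at $1/2$, so
\[
\nu_1(\{\tau_1>n\})=\int_{1/2}^{(1+y_{1,n})/2}\rho_1\,d\lambda\sim\frac{\rho_1(1/2)}{2}\,y_{1,n}\sim\frac{\rho_1(1/2)}{4n}.
\]
That is, $c_1=\rho_1(1/2)/4$, not $\rho_1(1/2)/8$.

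With $v_-'(1)=-1$, \eqref{eq abstract main eq} then gives the one-sided derivative $-\frac{4}{\rho_1(1/2)}\int(\phi-\phi(0))\,d\nu_1$. The same correction applies to the ess-lim display. More generally, $c_\alpha$ in Example~\ref{example MP map} should be $\rho_\alpha(1/2)/\bigl(2(\alpha 2^\alpha)^{1/\alpha}\bigr)$. As written, your Step~1 certifies a constant that is off by a factor of $2$.

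The qualitative conclusions are unaffected, since only $c_1>0$ enters them:
\begin{itemize}
\item existence of the one-sided limit;
\item the criterion that $R_{\mathrm{Phy},\phi}$ is differentiable at $1$ iff $\int(\phi-\phi(0))\,d\nu_1=0$;
\item the time-average statement, with the corrected constant.
\end{itemize}
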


\subsection*{Acknowledgments}
This work was supported by the JSPS KAKENHI 25KJ1382.

\subsection*{Data Availability}
No datasets were generated or analysed during the current study.

\subsection*{Statements and Declarations}
{Competing Interests:} The author declares that there are no competing interests.

\bibliographystyle{abbrv}
\bibliography{reference}
 \nocite{*}
\end{document}